\journal{System and Control Letters}
\tikzstyle{int}=[draw, fill=blue!20, minimum size=2em]
\tikzstyle{init} = [pin edge={to-,thin,black}]
\newtheorem{Thm}{Theorem}[section]
\newtheorem{Lem}[Thm]{Lemma}
\newtheorem*{Thm*}{Theorem}
\newtheorem{Cor}[Thm]{Corollary}
\theoremstyle{definition}
\newtheorem{Def}[Thm]{Definition}
\newtheorem{Rem}[Thm]{Remark}
\newcommand{\R}{\mathbb{R}}
\newcommand{\N}{\mathbb{N}}
\newcommand{\cB}{\mathcal{B}}
\newcommand{\cC}{\mathcal{C}}
\newcommand{\cF}{\mathcal{F}}
\newcommand{\cL}{\mathcal{L}}
\newcommand{\cM}{\mathcal{M}}
\newcommand{\cV}{\mathcal{V}}
\newcommand{\vp}{\varphi}
\newcommand{\ve}{\varepsilon}
\newcommand{\rp}{\mathbb{R}_{\geq 0}}
\newcommand{\setdef}[2]{\left\{\, #1\, \left|\, \vphantom{#1} #2\, \right.\right\}}
\newenvironment{smallpmatrix}%          environment name
{\left(\begin{smallmatrix}}%            begin code
{\end{smallmatrix}\right)}%             end code
\newenvironment{smallbmatrix}%          environment name
{\left[\begin{smallmatrix}}%            begin code
{\end{smallmatrix}\right]}%             end code
\DeclareMathOperator{\im}{im}
\DeclareMathOperator{\diag}{diag}
\DeclareMathOperator{\Gl}{\mathbf{Gl}}
\DeclareMathOperator{\rk}{\rm rk}
\begin{document}

\begin{frontmatter}

\title{Internal dynamics of multibody systems\tnoteref{Support}}
\tnotetext[Support]{This work was supported by the German Research Foundation (Deutsche Forschungsgemeinschaft) via the grant BE 6263/1-1. \\
\textit{Email adress:} \textbf{lanza@math.upb.de} (Lukas Lanza)}

%% Group authors per affiliation:
%\author{Lukas Lanza}
%\address{Warburger Str. 100, 33098 Paderborn, Germany}
%\fntext[myfootnote]{Since 1880.}

%% or include affiliations in footnotes:
\author[mymainaddress]{Lukas Lanza}
%\ead[url]{www.elsevier.com}

%\author[mysecondaryaddress]{Global Customer Service\corref{mycorrespondingauthor}}
%\cortext[mycorrespondingauthor]{Corresponding author}
%\ead{support@elsevier.com}

\address[mymainaddress]{Institut f\"ur Mathematik, Universit\"at Paderborn, Warburger Str. 100, 33098 Paderborn, Germany}
%\address[mysecondaryaddress]{360 Park Avenue South, New York}

\begin{abstract}
We consider nonlinear multibody systems and present a suitable set of coordinates for the internal dynamics which allow to decouple the internal dynamics without the need to compute the Byrnes-Isidori form. 
Furthermore, we derive sufficient conditions for the system parameters such that the internal dynamics of a class of systems with constant mass matrix are bounded-input, bounded-output stable.
\end{abstract}

\begin{keyword}
internal dynamics; multibody systems; bounded-input, bounded-output stability 
\end{keyword}

\end{frontmatter}

%\linenumbers

\section{Introduction}
In the last two decades it turned out that for the purpose of high-gain based output tracking the so called funnel controller developed in~\cite{IlchRyan02b} and generalized to nonlinear systems with arbitrary relative degree in~\cite{BergLe18a} is a powerful tool. 
However, both necessitate stability of the internal dynamics in a certain sense.
Although there is progress in tracking control of systems with unstable internal dynamics, see e.g. \cite{Berg20a, BergLanz20, Seif12b, Seif14}, most controllers require the internal dynamics to be bounded-input, bounded-output stable (minimum phase property), c.f. 
\cite{ByrnWill84, ByrnIsid84, KhalSabe87, MillDavi91, IlchRyan02b, SeifBlaj13, BergLe18a, BergReis18a, BergOtto19} 
and see also the survey~\cite{IlchRyan08}, and for a detailed introduction to the concept of minimum phase~\cite{IlchWirt13}, and the references therein, resp. Therefore, in order to verify applicability of a certain controller it is necessary to decouple the internal dynamics, e.g. via the Byrnes-Isidori form as in~\cite{Isid95}, and investigate its stability. 
However, the computation of the Byrnes-Isidori form of a nonlinear multibody system is often a challenging task. In the present work we combine the idea of the Byrnes-Isidori form with a novel approach to decouple the internal dynamics without the need to compute the Byrnes-Isidori form explicitly. This results in a representation of the internal dynamics in terms of the internal variables and the system's output. \\
Moreover, we present sufficient conditions on the system parameters such that the internal dynamics are bounded-input, bounded-output stable. These conditions are independent of the representation of the internal dynamics and can be verified without their explicit decoupling. Hence, applicability of a controller e.g as in~\cite{IlchRyan07, BergReis18a, BergLe18a} can be determined without decomposing the system but by investigation of the system's equations only. 
\ \\
This paper is organized as follows. We briefly recall the concepts of Lie derivatives, relative degree and the representation of a dynamical system in Byrnes-Isidori form in Section~\ref{Sec:System-class}. 
In Section~\ref{Sec:Representation-ID} we derive the representation of the internal dynamics in terms of the internal variables and the system's output. Furthermore, we give lemmata of existence and uniqueness, resp., concerning the novel structural ansatz for the internal dynamics. 
In Section~\ref{Sec:Stability-of-ID} we give an abstract stability result exploiting LaSalle's invariance principle presented in~\cite{LaSa60}. Furthermore, we consider nonlinear multibody systems without kinematic loop and provide sufficient conditions on the system parameters such that the internal dynamics are bounded-input, bounded-output stable.
We finish this paper with two illustrative examples in Section~\ref{Sec:Example}. 

\subsection{Nomenclature}
Throughout the present paper we will use the following notation:
$\rp  :=[0,\infty)$;
$\|x\|:= \sqrt{x^\top x}$ Euclidean norm of $x \in \R^n$;
$\| A \|:= \max_{\|x\| = 1} \| A x\|$ spectral norm of~$A \in \R^{m \times n}$;
$\Gl_n(\R)$  the group of invertible matrices in $\R^{n \times n}$;
$B_r(x):= \{ z \in \R^n \ \vline \ \|z-x\|<r \}$ open ball of radius $r > 0$, centred at $x \in \R^n$;
$\cC^k(V \to W)$  the set of $k$~times continuously differentiable functions~$f: V \to W$, $k \in \N$ and $V \subseteq \R^n$, $W \subseteq \R^m$;
$\cL^\infty(I \to  \R^n)$  the set of essentially bounded functions $f: I \to \R^n$,  $I \subseteq \R$ an interval;
$\| f \|_\infty := \sup_{t \in I}\|f(t)\|$ the supremum norm of $f \in \cL^\infty(I\to \R^n)$.

\section{System class, relative degree and Byrnes-Isidori form} \label{Sec:System-class}
In this section we briefly recall some basic concepts such as relative degree and the representation of a system in Byrnes-Isidori form.
We consider nonlinear multibody systems without kinematic loops which are modeled using generalized coordinates and are of the form
\begin{align} \label{eq:System}
\dot q(t) &= v(t), && q(0) = q^0 \in \R^n, \nonumber \\
M(q(t)) \dot v(t) & = f(q(t), v(t)) + B(q(t)) u(t), && v(0) = v^0 \in \R^n, \nonumber\\
y(t) &= h(q(t)),
\end{align}
where $M \in \cC( \R^n \to \Gl_n(\R))$ is the generalized mass matrix, $f \in \cC( \R^n \times \R^n \to \R^n )$ are the generalized forces, $B \in \cC( \R^n \to \R^{n \times m})$ is the distribution of the input and $h \in \cC^1( \R^n \to \R^m)$ is the measurement. 
The functions $u: \R_{\ge 0} \to \R^m$ are the inputs that exert an influence to system~\eqref{eq:System}, and $y: \R_{\ge 0} \to \R^m$ are the outputs that typically represent physically meaningful measurements of system~\eqref{eq:System}. Note, that the dimensions of the input and output coincide but we do not assume collocation, i.e., we do not assume~$h'(q) = B(q)^\top$.
For system~\eqref{eq:System} we introduce the state variables~$x_1 := q$, $x_2 := v$, setting~$x := (x_1^\top, x_2^\top)^\top$, and  
transform~\eqref{eq:System} into the system of first order ordinary differential equations
\begin{align} \label{eq:ODE}
\dot x(t)& = 
\underbrace{
\begin{pmatrix}
x_2(t) \\
 M(x_1(t))^{-1}f(x_1(t), x_2(t))
\end{pmatrix}}_{= : F(x_1(t), x_2(t))} \nonumber \\
&+ \underbrace{
\begin{bmatrix}
0 \\
M(x_1(t))^{-1}B(x_1(t))
\end{bmatrix}}_{=: G(x_1(t),x_2(t))} u(t),  && x(0) = x^0 \in \R^{2n} \nonumber \\
y(t) &= \tilde h(x_1(t), x_2(t)),
\end{align}
where~$\tilde h : \R^{2n} \to \R^m$ with~$\tilde h(x_1,x_2) = h(x_1)$.
In order to decouple the internal dynamics we invoke the Byrnes-Isidori form for~\eqref{eq:ODE}. To this end, recall the definition of the \textit{Lie derivative} of a function~$h$ along a vector field~$F$ at a point~$z \in U \subseteq \R^{2n}$, $U$ open
\begin{equation*}
(L_F \tilde h)(z) := \tilde h'(z) F(z),
\end{equation*}
where~$\tilde h'$ is the Jacobian of~$\tilde h$. We may successively define $L_F^k \tilde h = L_F(L_F^{k-1} \tilde h)$ with $L_F^0 \tilde h =\tilde h$. We denote with~$g_i(z)$ the columns of~$G(z)$ for $i = 1,...,m$ and define
\begin{equation*}
(L_G \tilde h)(z) := [(L_{g_1} \tilde h)(z),...,(L_{g_m} \tilde h)(z) ].
\end{equation*}
In accordance with~\cite[Sec. 5.1]{Isid95} we recall the concept of (strict) \textit{relative degree} for multi-input, multi-output systems.
\begin{Def}
System~\eqref{eq:ODE} has relative degree~$r \in \N$ on~$U \subseteq \R^{2n}$ open, if for all~$z \in U$ we have
\begin{equation} \label{rel-deg}
\begin{aligned}
 \forall\, k \in \{0,...,r-2 \}: \ (L_G^{} L_F^k \tilde h)(z) &= 0_{m \times m} \\
\text{\rm and} \ \tilde \Gamma(z) := (L_G^{} L_F^{r-1} \tilde h)(z) &\in \Gl_m(\R),
\end{aligned}
\end{equation}
where~$\tilde \Gamma: U \to \Gl_m(\R)$ is the high-gain matrix. 
\end{Def}
Now, if system~\eqref{eq:ODE} has relative degree~$r$ on an open set~$U \subseteq \R^{2n}$, then there exists a (local) diffeomorphism 
$\Phi: U \to W \subseteq \R^{2n}$, $W$ open, such that 
\begin{equation*}
\begin{pmatrix} \xi(t) \\ \eta(t) \end{pmatrix} = \Phi (x(t)),
\end{equation*}
with~$ \xi(t) \in \R^{rm}$, $\eta(t) \in \R^{2n-rm}$ transforms system~\eqref{eq:ODE} nonlinearly into \textit{Byrnes-Isidori form}
\begin{align} \label{eq:abstract-BIF}
y(t) &= \xi_1(t), \nonumber\\
\dot \xi_1(t) &= \xi_2(t), \nonumber \\
& \ \ \vdots \\
\dot \xi_{r-1}(t) &= \xi_r(t), \nonumber\\
\dot \xi_r(t) &= (L_F^r \tilde h)\big( \Phi^{-1}(\xi(t),\eta(t))\big) + \Gamma \big( \Phi^{-1}(\xi(t),\eta(t)) \big) u(t), \nonumber\\
\dot \eta(t) &= q(\xi(t),\eta(t)) + p(\xi(t),\eta(t)) u(t). \nonumber
\end{align}
The last equation in~\eqref{eq:abstract-BIF} represents the internal dynamics of system~\eqref{eq:ODE}. Note, that for~$r\cdot m < 2n$ system~\eqref{eq:ODE} has nontrivial interal dynamics.
\\

With the aid of Lie derivatives the diffeomorphism~$\Phi$ can be represented as
\begin{equation}\label{fct:abstract-BIF-representation}
\Phi(x) = \begin{pmatrix} \tilde h(x) \\ (L_F^{} \tilde h)(x) \\ \vdots \\ (L_F^{r-1} \tilde h)(x) \\ \tilde \phi_{1}(x) \\ \vdots \\ \tilde \phi_{2n-rm}(x) \end{pmatrix}, 
\ x \in U \subseteq \R^{2n}
\end{equation}
where $\tilde \phi_i: U \to \R$, $i=1,...,2n-rm$,
are such that~$\Phi'(x)$ is invertible for all~$ x \in U$. We recall $x = (x_1^\top, x_2^\top)^\top \in \R^{2n}$ and make the following assumption.
\begin{enumerate}[label=\textbf{(A\arabic*)},ref=(A\arabic*)]
\item \label{ass:hMB-regular} Given some open set $U_1 \subseteq \R^n$ and $H(x_1) := h'(x_1)$,
 we have $ \Gamma(x_1) := H(x_1) M(x_1)^{-1} B(x_1) \in \Gl_m(\R) $  for all $x_1 \in U_1$.
\end{enumerate}
\begin{Lem} \label{Lem:r-2} 
Consider system~\eqref{eq:ODE} and assume~\ref{ass:hMB-regular}. Then system~\eqref{eq:ODE} has relative degree~$r=2$ on $U := U_1 \times \R^n$.
\end{Lem}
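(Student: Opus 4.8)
The plan is to verify the two conditions in~\eqref{rel-deg} directly for $r=2$, that is, to establish $(L_G \tilde h)(z) = 0_{m \times m}$ for all $z \in U$ (the only case being $k=0$) and $(L_G L_F \tilde h)(z) \in \Gl_m(\R)$ for all $z \in U$. The entire argument rests on the block structure of $\tilde h$, $F$ and $G$: namely that $\tilde h(x_1,x_2)=h(x_1)$ does not depend on $x_2$, and that $G$ has a vanishing upper block.

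First I would record the Jacobian $\tilde h'(x) = [H(x_1),\, 0_{m \times n}]$ with $H = h'$. Multiplying by $G(x) = \begin{smallbmatrix} 0 \\ M(x_1)^{-1} B(x_1) \end{smallbmatrix}$ gives $(L_G \tilde h)(x) = \tilde h'(x) G(x) = 0_{m \times m}$, because the zero $x_2$-block of $\tilde h'$ meets the only nonzero block of $G$. This settles the first condition on all of $\R^{2n}$, in particular on $U$.

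Next I would compute $(L_F \tilde h)(x) = \tilde h'(x) F(x) = H(x_1) x_2$, the term $M(x_1)^{-1} f(x_1,x_2)$ being annihilated by the zero block. To form $(L_G L_F \tilde h)(x) = (L_F \tilde h)'(x) G(x)$ I only need the $x_2$-block of the Jacobian of $x \mapsto H(x_1) x_2$, again because the upper block of $G$ vanishes; that block is exactly $H(x_1)$. Hence $(L_G L_F \tilde h)(x) = H(x_1) M(x_1)^{-1} B(x_1) = \Gamma(x_1)$, which by Assumption~\ref{ass:hMB-regular} lies in $\Gl_m(\R)$ for every $x_1 \in U_1$. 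Since $\Gamma$ depends on $x_1$ alone, the condition holds for all $x = (x_1^\top, x_2^\top)^\top \in U_1 \times \R^n = U$, and $r=2$ follows.

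The computation carries no deep obstacle; the point requiring care is the differentiability needed to make $(L_G L_F \tilde h)$ well defined. A priori, forming the Jacobian of $L_F \tilde h = H(x_1) x_2$ would involve $H' = h''$, which the standing hypothesis $h \in \cC^1$ does not supply. However, since $G$ has a vanishing upper block, only the partial derivative with respect to $x_2$ enters, and this equals $H(x_1)$, which is continuous under the stated regularity. I would make this observation explicit so that the relative-degree conditions are meaningful, and I would emphasize that the independence of $\Gamma$ from $x_2$ is precisely what yields the product set $U = U_1 \times \R^n$ rather than a smaller neighbourhood.
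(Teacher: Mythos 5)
Your proposal is correct and follows essentially the same route as the paper's proof: a direct computation of $(L_G \tilde h)$, $(L_F \tilde h)$ and $(L_G L_F \tilde h)$ exploiting the zero upper block of $G$, concluding with Assumption~\ref{ass:hMB-regular}. Your additional observation---that only the $x_2$-partial of $L_F\tilde h$ is needed, so no second derivative of $h$ is required despite the paper formally writing the full Jacobian $\begin{bmatrix} \tfrac{\partial}{\partial x_1} H(x_1)x_2 & H(x_1)\end{bmatrix}$---is a careful refinement of the same argument, not a different one.
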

\begin{proof}
Let~$x = (x_1^\top, x_2^\top)^\top \in U$.
Set~$H(x_1):= h'(x_1)$ and $\tilde H(x) := \tilde h'(x)$, then $\tilde H(x) = \begin{bmatrix} H(x_1) & 0 \end{bmatrix}$. Now, compute the Lie derivatives of~\eqref{eq:ODE} for~$x \in U$
\begin{align*}
(L_G \tilde h)(x) &= \begin{bmatrix} H(x_1) & 0 \end{bmatrix}  \begin{bmatrix} 0 \\ M(x_1)^{-1} B(x_1) \end{bmatrix} \\
&= 0_{m \times m}, \\
(L_F \tilde h)(x) &= \begin{bmatrix} H(x_1) & 0 \end{bmatrix}\begin{pmatrix} x_2 \\ M(x_1)^{-1} f(x_1,x_2) \end{pmatrix}  \\
&= H(x_1) x_2, \\
(L_G L_F \tilde h)(x) &= \begin{bmatrix} \frac{\partial}{\partial x_1} H(x_1) x_2 & H(x_1) \end{bmatrix} \begin{bmatrix} 0 \\ M(x_1)^{-1} B(x_1) \end{bmatrix} \\
&=: \Gamma(x_1) % \in \Gl_m(\R),
\end{align*}
where~$\Gamma(x_1) = H(x_1) M(x_1)^{-1} B(x_1)$ is invertible by assumption~\ref{ass:hMB-regular}. Therefore, according to~\eqref{rel-deg} system~\eqref{eq:ODE} has relative degree~$r=2$ on~$U$.
\end{proof}

\section{Representation of the internal dynamics} \label{Sec:Representation-ID}
In this section we introduce a novel structural ansatz for the internal dynamics and present a set of feasible coordinates to represent the internal dynamics of~\eqref{eq:ODE} without the need to compute the Byrnes-Isidori form explicitly.
We show the existence and feasibility of functions forming the novel representation of the internal dynamics in Lemma~\ref{Lem:existence-phi1/2}, and provide a particular choice and show its uniqueness in Lemma~\ref{Lem:phi2}. At the end of this section we present a representation of the internal dynamics which is completely determined by the system parameters.

\ \\
Via~\eqref{fct:abstract-BIF-representation} with~$r=2$ we obtain the following representation of the diffeomorphism~$\Phi$
\begin{equation*}
\begin{pmatrix} \xi \\ \eta \end{pmatrix} = \Phi(x) = \begin{pmatrix} h(x_1) \\ H(x_1) x_2 \\ \tilde \phi_1(x) \\ \vdots \\ \tilde \phi_{2n-2m}(x) \end{pmatrix}, \quad x = \begin{pmatrix} x_1 \\ x_2 \end{pmatrix} \in U,
\end{equation*}
where $\tilde \phi_i: U \to \R$, $i=1,...,2n-2m$ and, as in Lemma~\ref{Lem:r-2}, $U_1 \subseteq \R^n$ is an open set and~$U = U_1 \times \R^n$. \\
We make the following structural ansatz for the internal state $\eta = (\tilde \phi_1(x),...,\tilde \phi_{2n-2m}(x))^\top$
\begin{equation} \label{eq:eta-structure}
\eta = \begin{pmatrix} \eta_1 \\ \eta_2 \end{pmatrix} = \begin{pmatrix} \phi_1(x_1) \\ \phi_2(x_1) x_2 \end{pmatrix},
\end{equation}
where~$\phi_1 \in \cC^1( U_1 \to \R^{n-m})$ and~$\phi_2 \in \cC (U_1 \to \R^{(n-m) \times n})$. We highlight, that
\begin{equation}\label{eq:xi-structure}
\xi = \begin{pmatrix} \xi_1 \\ \xi_2 \end{pmatrix} = \begin{pmatrix} h(x_1) \\ H(x_1) x_2 \end{pmatrix}
\end{equation}
and~\eqref{eq:eta-structure} have similar structure, i.e., the internal dynamics are in the form of a mechanical system as well. Now, since $\Phi$ is a diffeomorphism we require its Jacobian to be invertible on~$U$
\begin{equation}
\label{eq:Phi-Jacobian}
\begin{aligned}
\forall\, x \in U: \
\Phi'(x) = \begin{bmatrix}
H(x_1) & 0 \\
* & H(x_1) \\
\phi'_1(x_1) & 0 \\
* & \phi_2(x_1)
\end{bmatrix} \in \Gl_{2n}(\R)
\iff \\
\forall \, x_1 \in U_1: \
\begin{bmatrix} H(x_1) \\ \phi'_1(x_1) \end{bmatrix}  \in \Gl_n(\R) \ \wedge \ 
\begin{bmatrix} H(x_1) \\ \phi_2(x_1) \end{bmatrix} \in \Gl_n(\R), 
\end{aligned}
\end{equation}
where $*$ is of the form $\tfrac{\partial}{\partial x_1} [\zeta(x_1) \cdot x_2]$, $\zeta: U_1 \to \R^{q \times n}$ with $q \in \N$ appropriate, resp. 
\ \\
We aim to investigate the internal dynamics of~\eqref{eq:ODE} without explicit appearance of the input~$u$. To this end, we seek for functions~$\tilde \phi_1(x),..., \tilde \phi_{2n-2m}(x)$ such that~$p(\cdot) = 0$ in equation~\eqref{eq:abstract-BIF}, i.e., $[L_G \tilde \phi_i](x) = 0$ for all $x \in U$, $i=1,...,2n-2m$.
In view of~\eqref{eq:eta-structure} this means to find functions~$\phi_1$, $\phi_2$ such that
\begin{align}\label{eq:phi2-G=0}
& \forall \, x \in U: \ 
\begin{bmatrix}
\phi_1'(x_1) & 0 \\
\frac{\partial}{\partial x_1} \phi_2(x_1)x_2 & \phi_2(x_1)
\end{bmatrix}
\begin{bmatrix}
0 \\
M(x_1)^{-1} B(x_1)
\end{bmatrix} = 0
\nonumber \\ 
&\iff 
 \forall\, x_1 \in U_1: \ \phi_2(x_1) M(x_1)^{-1} B(x_1) = 0.
\end{align}
In the following lemma we show the existence of functions $\phi_1, \phi_2$ satisfying the aforesaid.
\begin{Lem} \label{Lem:existence-phi1/2}
Consider the ODE~\eqref{eq:ODE} and assume~\ref{ass:hMB-regular}.
For any~$x_1^0 \in U_1 $
there exist an open neighbourhood $ U_1^0 \subseteq U_1$ of~$x_1^0$ and $ \phi_1\in \cC^1( U_1^0 \to \R^{n-m})$, 
$\phi_2\in\cC(U_1^0 \to \R^{(n-m) \times n})$ such that~\eqref{eq:Phi-Jacobian} and~\eqref{eq:phi2-G=0} hold locally on~$U_1^0$.
\end{Lem}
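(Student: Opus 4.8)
The plan is to exploit that the requirements on $\phi_1$ and $\phi_2$ decouple: condition~\eqref{eq:phi2-G=0} together with the second invertibility condition in~\eqref{eq:Phi-Jacobian} constrains only $\phi_2$, while the first invertibility condition in~\eqref{eq:Phi-Jacobian} constrains only $\phi_1'$. I would therefore build the two functions separately and intersect the resulting neighbourhoods at the end. The linear-algebraic backbone, which I would establish first, is this: writing $N(x_1) := M(x_1)^{-1} B(x_1) \in \R^{n \times m}$, assumption~\ref{ass:hMB-regular} forces $\rk N(x_1) = m$ and $\rk H(x_1) = m$ (both follow from $\Gamma = H N \in \Gl_m(\R)$), so the row space of $H(x_1)$ has dimension $m$ and the left null space of $N(x_1)$ has dimension $n-m$. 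These two subspaces of $\R^n$ meet only in $0$: if $\alpha^\top H = w^\top$ with $w^\top N = 0$, then $\alpha^\top \Gamma = \alpha^\top H N = 0$, so $\alpha = 0$ by invertibility of $\Gamma$. Since their dimensions sum to $n$, they are complementary.

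For $\phi_2$ I would produce a continuous basis of the left null space of $N$ by a local Schur-complement selection. As $N(x_1^0)$ has full column rank, some $m$ of its rows are linearly independent; after a fixed row permutation $P$, which remains valid on a neighbourhood by continuity of $N$, write $P N = \left[\begin{smallmatrix} N_1 \\ N_2 \end{smallmatrix}\right]$ with $N_1 \in \Gl_m(\R)$, and set $\phi_2 := \left[\begin{smallmatrix} -N_2 N_1^{-1} & I_{n-m} \end{smallmatrix}\right] P$. A direct computation yields $\phi_2 N = 0$, which is~\eqref{eq:phi2-G=0}, and the identity block forces $\rk \phi_2 = n-m$, so the rows of $\phi_2$ span exactly the left null space of $N$; by the complementarity above, $\left[\begin{smallmatrix} H \\ \phi_2 \end{smallmatrix}\right] \in \Gl_n(\R)$, giving the second part of~\eqref{eq:Phi-Jacobian}. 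Continuity of $\phi_2$ follows from continuity of $N_1^{-1}$ and $N_2$. This continuous selection step is the only genuine obstacle: a globally continuous basis of a moving null space need not exist, which is precisely why the statement is formulated locally around $x_1^0$.

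For $\phi_1$ I would complete $h$ to a local diffeomorphism by adjoining coordinate functions. Since $H(x_1^0)$ has rank $m$, some $m$ columns, indexed by a set $T$, are linearly independent; I take $\phi_1(x_1)$ to be the projection of $x_1$ onto the remaining $n-m$ coordinates, so $\phi_1$ is linear, hence $\cC^1$ with constant Jacobian. Any $v$ in the kernel of $\left[\begin{smallmatrix} H(x_1^0) \\ \phi_1' \end{smallmatrix}\right]$ is supported on $T$ and satisfies $H(x_1^0) v = 0$, forcing $v = 0$ via the invertible submatrix formed by the columns of $H(x_1^0)$ indexed by $T$; thus this stacked matrix lies in $\Gl_n(\R)$, and by continuity of $H$ it stays invertible on a neighbourhood, yielding the first part of~\eqref{eq:Phi-Jacobian}. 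Intersecting this neighbourhood with the one obtained for $\phi_2$ gives the desired $U_1^0$ on which all conditions hold simultaneously.
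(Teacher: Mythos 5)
Your proof is correct, and it reaches the same overall structure as the paper's (a linear coordinate-selection map $E$ for $\phi_1$, a continuous basis of the left kernel of $M^{-1}B$ for $\phi_2$, and an intersection of neighbourhoods at the end), but it differs in the key technical tool. The paper delegates the continuous-selection work to a cited result, \cite[Lem.~4.1.5]{Sont98a}, applied twice: once to $H$ to produce a continuous $T_2$ with $\im T_2 = \ker H$ (from which $E$ is chosen so that $ET_2(x_1^0)$ is invertible), and once to $(M^{-1}B)^\top$ to obtain $\phi_2 = T_2^\top$; invertibility of $\left[\begin{smallmatrix} H \\ \phi_2 \end{smallmatrix}\right]$ is then verified by an explicit block-triangular product whose diagonal blocks are $\Gamma$ and $\phi_2\phi_2^\top$. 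You instead inline that machinery: your Schur-complement construction $\phi_2 = \left[\begin{smallmatrix} -N_2 N_1^{-1} & I_{n-m}\end{smallmatrix}\right]P$ is in effect an elementary local proof of the cited selection lemma for this special case, and your complementarity argument (row space of $H$ and left null space of $N$ intersect trivially because $\Gamma = HN$ is invertible, and their dimensions sum to $n$) replaces the paper's explicit product computation by dimension counting. What your route buys is self-containedness and a clear explanation of \emph{why} the statement is only local (the continuous choice of $N_1$ via a fixed row permutation is exactly what can fail globally); what the paper's route buys is brevity and a construction ($\phi_2 = T_2^\top$ with orthonormal-type columns from an invertible $T$) that dovetails with the later choice of $V$ with $\im V(x_1) = \ker H(x_1)$ in Lemma~\ref{Lem:phi2} and~\eqref{Cor:Inverse}. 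Both proofs are valid; yours is a legitimate, more elementary alternative.
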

 
\begin{proof}
We fix $x_1^0 \in U_1$ and make use of~\cite[Lem. 4.1.5]{Sont98a} which states the following. 
Consider $W \in \cC( U_1 \to \R^{w \times n})$ with $\rk W(x_1) = w$ for all~$x_1 \in U_1$. 
Then there exist an open neighbourhood $V_1 \subseteq U_1$ of~$x_1^0$ and~$T \in \cC(V_1 \to \Gl_n(\R))$ such that
\[
\forall\, x_1 \in V_1 : \ W(x_1) T(x_1) = \begin{bmatrix} I_w & 0 \end{bmatrix}.
\]
We use this to show the existence of $\phi_1 \in \cC^1( U_1^0 \to \R^{n-m})$. 
Since by assumption~\ref{ass:hMB-regular} we have $\rk H(x_1) =  m$ for all ${x_1 \in U_1}$ there exist an open neighbourhood $V_1 \subseteq U_1$ of~$x_1^0$ and 
$T = [T_1, T_2] \in \cC(V_1 \to \Gl_n(\R))$ such that
\begin{equation*}
\forall\, x_1 \in V_1: \ 
H(x_1) 
\begin{bmatrix} 
T_1(x_1) & T_2(x_1)
\end{bmatrix}
= \begin{bmatrix}
I_{ m} & 0
\end{bmatrix},
\end{equation*}
i.e., $\im T_2(x_1) = \ker H(x_1) $ and $\rk T_2(x_1) = n-m$ for all~$x_1 \in V_1$. Let~$E = [e_{i_1}^\top,...,e_{i_{n-m}}^\top]^\top \in \R^{(n-m) \times n}$ with $e_{i_j} \in \R^{1 \times n}$ a unit row-vector for~$i_j \in \{1,...,n\}$. Then
\begin{equation*}
\begin{bmatrix}
 H(x_1) \\ E
\end{bmatrix} 
\begin{bmatrix} 
T_1(x_1) & T_2(x_1)
\end{bmatrix}
= \begin{bmatrix}
I_{ m} & 0 \\ * & E T_2(x_1)
\end{bmatrix}.
\end{equation*}
Since $\rk T_2(x_1^0) = n-m$ it is possible to choose $i_1,\dots,i_{n-m}$ such that $E T_2(x_1^0) \in \Gl_{n-m}(\R)$. 
As $T_2 \in \cC(V_1 \to \R^{n \times (n-m)})$ the mapping $x_1 \mapsto \det(ET_2(x_1))$ is continuous on~$V_1$, hence there exists an open neighbourhood $\bar V_1 \subseteq V_1$ of 
$ x_1^0$ such that $\det(ET_2(\bar x_1)) \neq 0$ for all $\bar x_1 \in \bar V_1$. 
Thus,
\begin{equation*}
\forall\, x_1 \in \bar V_1: \
\rk \begin{bmatrix}
H(x_1) \\ E
\end{bmatrix} = n.
\end{equation*}
Therefore, with
\begin{align*}
\phi_1 :  \bar V_1 &\to \R^{n-m}, \quad
x_1  \mapsto E x_1
\end{align*}
we have $\phi_1 \in \cC^1(\bar V_1 \to \R^{n-m})$ and the first condition in~\eqref{eq:Phi-Jacobian} is satisfied on~$\bar V_1$ since $\phi_1'(x_1) = E$.
\\
Now, we show the existence of~$\phi_2 \in \cC(U_1^0 \to \R^{n-m})$. 
Observe that by assumption~\ref{ass:hMB-regular} we have $ \rk B(x_1) =  m$ and $\rk M(x_1) = n$ for all~$x_1 \in U_1$. 
Therefore, again via \cite[Lem. 4.1.5]{Sont98a}, there exist an open neighbourhood $\tilde V_1 \subseteq  U_1$ of~$x_1^0$ and $T = [T_1, T_2] \in \cC(\tilde V_1 \to \Gl_n(\R))$ such that
\begin{equation*}
\forall \, x_1 \in \tilde V_1: \, [M(x_1)^{-1}  B(x_1)]^\top \begin{bmatrix} T_1(x_1) & T_2(x_1)\end{bmatrix}
= \begin{bmatrix} I_{ m} & 0 \end{bmatrix},
\end{equation*}
i.e., $\im T_2(x_1) = \ker (M(x_1)^{-1} B(x_1))^\top$ and $T_2 \in \cC(\tilde V_1 \to \R^{n \times (n-m)})$. Now, we observe for
${\phi_2(x_1) = T_2(x_1)^\top}$
\begin{equation*}
\begin{aligned}
& \begin{bmatrix} H(x_1) \\ \phi_2(x_1) \end{bmatrix}
\begin{bmatrix} M(x_1)^{-1} B(x_1) & \phi_2(x_1)^\top \end{bmatrix} 
\\ &=
\begin{bmatrix} \Gamma(x_1) & * \\ 0 & \phi_2(x_1) \phi_2(x_1)^\top  \end{bmatrix} \in \R^{n \times n}, \  x_1  \in \tilde V_1
\end{aligned}
\end{equation*}
which is invertible on~$\tilde V_1$ since by assumption~\ref{ass:hMB-regular} the high-gain matrix $\Gamma(x_1)$ is invertible on~$U_1$, and $\rk T_2(x_1) = n-m$ for all~$ x_1 \in \tilde V_1$. Hence the second condition in~\eqref{eq:Phi-Jacobian} is satisfied on~$\tilde V_1$. 
Moreover, equation~\eqref{eq:phi2-G=0} is true on~$\tilde V_1$ by construction of~$\phi_2$. 
We set $U_1^0 := \bar V_1 \cap \tilde V_1$ which completes the proof.
\end{proof}

The proof of the previous lemma justifies the structural ansatz for the internal state~$\eta$ in~\eqref{eq:eta-structure}. Hereinafter let~$U_1 = U_1^0$ with~$U_1^0$ as in Lemma~\ref{Lem:existence-phi1/2}. \\
While~$\phi_1$ can basically be chosen freely up to~\eqref{eq:Phi-Jacobian}, $\phi_2$ is uniquely determined up to an invertible left transformation. To find all possible representations, let $P: U_1\to \R^{n \times m}$ and $V: U_1\to \R^{n \times (n-m)}$ be such that
\begin{equation} \label{eq:PV-inverse}
\forall\, x_1\in U_1:\ [P(x_1), V(x_1)] \begin{bmatrix} H(x_1) \\ \phi_2(x_1) \end{bmatrix} = I_n,
\end{equation}
which exist by~\eqref{eq:Phi-Jacobian}. Then $P, V$ have pointwise full column rank, by which the pseudoinverse of~$V$ is given by 
$V^\dagger(x_1) = (V(x_1)^\top V(x_1))^{-1} V(x_1)^\top$, $x_1\in U_1$. For $x_1\in U_1$ we define
\begin{equation} \label{fcn:phi2-def}
\begin{aligned}
\phi_2(x_1) &:=  V^\dagger(x_1) \bigg(I_n - M(x_1)^{-1}  B(x_1) \Gamma(x_1)^{-1} H(x_1)  \bigg).
\end{aligned}
\end{equation}

\begin{Lem} \label{Lem:phi2}
We use the notation and assumptions from Lemma~\ref{Lem:existence-phi1/2}.
Then the function~$\phi_2: U_1 \to \R^{(n-m) \times n}$ is uniquely determined by~\eqref{eq:Phi-Jacobian} and~\eqref{eq:phi2-G=0} up to an invertible left transformation. All possible functions are given by~\eqref{fcn:phi2-def} for feasible choices of~$V$ satisfying~\eqref{eq:PV-inverse}.
\end{Lem}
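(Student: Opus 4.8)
The plan is to reduce both claims to a single geometric fact: every admissible $\phi_2(x_1)$ is a full-row-rank matrix whose row space equals the fixed $(n-m)$-dimensional subspace $N(x_1) := \ker\big((M(x_1)^{-1}B(x_1))^\top\big) = \big(\im M(x_1)^{-1}B(x_1)\big)^\perp$. First I would record the two elementary implications of the defining conditions. Condition~\eqref{eq:phi2-G=0}, $\phi_2 M^{-1}B = 0$, says exactly that each row of $\phi_2(x_1)$ lies in $N(x_1)$, so the row space is contained in $N(x_1)$; the invertibility condition in~\eqref{eq:Phi-Jacobian} forces $\phi_2(x_1)$ to have full row rank $n-m = \dim N(x_1)$, whence the row space equals $N(x_1)$. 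Conversely, I would observe that full row rank together with~\eqref{eq:phi2-G=0} already implies~\eqref{eq:Phi-Jacobian}: if $v\in\ker H(x_1)\cap\ker\phi_2(x_1)$ then $\phi_2 v = 0$ places $v\in N(x_1)^\perp = \im M^{-1}B$, say $v = M^{-1}Bw$, and $Hv=0$ gives $\Gamma(x_1)w = 0$, so $w=0$ by~\ref{ass:hMB-regular}. Thus, under full rank, the two conditions are equivalent, and this is the only place where~\ref{ass:hMB-regular} enters.

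The uniqueness up to an invertible left transformation is then immediate: two matrices of full row rank with identical row space $N(x_1)$ differ by an invertible left factor, and conversely $T\phi_2$ preserves~\eqref{eq:phi2-G=0} and, via $\begin{smallbmatrix}H\\T\phi_2\end{smallbmatrix} = \begin{smallbmatrix}I_m&0\\0&T\end{smallbmatrix}\begin{smallbmatrix}H\\\phi_2\end{smallbmatrix}$, also the invertibility; continuity of $T$ is inherited since $T$ can be written as $\tilde\phi_2 V$ with $V$ as below. Hence the admissible $\phi_2$ form exactly one orbit under continuous invertible left multiplication.

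For the explicit formula I would introduce the matrix $\Pi(x_1) := I_n - M^{-1}B\Gamma^{-1}H$ and verify the projector identities $\Pi^2 = \Pi$, $H\Pi = 0$ and $\Pi M^{-1}B = 0$, so that $\Pi$ is the projection onto $\ker H$ along $\im M^{-1}B$ (complementary by the argument above), with row space $\im\Pi^\top = (\ker\Pi)^\perp = N$. From~\eqref{eq:phi2-G=0} one gets $\phi_2\Pi = \phi_2 - \phi_2 M^{-1}B\Gamma^{-1}H = \phi_2$. Given an admissible $\phi_2$, the decomposition~\eqref{eq:PV-inverse} yields $P,V$ with $\begin{smallbmatrix}H\\\phi_2\end{smallbmatrix}[P,V] = I_n$, hence $HV = 0$, $\phi_2 V = I_{n-m}$ and, by a dimension count, $\im V = \ker H$. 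Then for every $z$ one has $\Pi z\in\ker H = \im V$, so $\Pi z = Vw$ with $w = V^\dagger\Pi z$, and therefore $V^\dagger\Pi z = w = \phi_2 Vw = \phi_2\Pi z = \phi_2 z$. This proves $\phi_2 = V^\dagger\Pi$, that is~\eqref{fcn:phi2-def}, so every admissible $\phi_2$ is of the stated form.

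For the converse direction I would take any feasible $V$ — a continuous map with $\im V(x_1) = \ker H(x_1)$, equivalently one arising in some decomposition~\eqref{eq:PV-inverse} — set $\phi_2 := V^\dagger\Pi$, and check admissibility: $\phi_2 M^{-1}B = V^\dagger(\Pi M^{-1}B) = 0$ gives~\eqref{eq:phi2-G=0} for free, while $\Pi V = V$ (the columns of $V$ lie in $\ker H$, on which $\Pi$ is the identity) yields $\phi_2 V = V^\dagger V = I_{n-m}$, so $\begin{smallbmatrix}H\\\phi_2\end{smallbmatrix}V = \begin{smallbmatrix}0\\I_{n-m}\end{smallbmatrix}$; this shows $\phi_2$ has full row rank and, by the equivalence of the first step, satisfies~\eqref{eq:Phi-Jacobian}, and it confirms $V$ as the trailing block of $\begin{smallbmatrix}H\\\phi_2\end{smallbmatrix}^{-1}$, so~\eqref{eq:PV-inverse} holds. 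I expect the main obstacle to be presentational rather than mathematical: disentangling the apparent circularity whereby $V$ in~\eqref{eq:PV-inverse} is defined through $\phi_2$ while $\phi_2$ in~\eqref{fcn:phi2-def} is defined through $V$. The clean way to break it is to note that both conditions pin down the same datum, a continuous choice of basis $V$ for the fixed subspace $\ker H(x_1)$, and to carry the identity $\Pi z = V\phi_2 z$ as the bridge between the two representations.
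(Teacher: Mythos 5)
Your proof is correct, but it is organized around a different mechanism than the paper's. The paper's own proof is a short, purely algebraic manipulation of \eqref{eq:PV-inverse}: it multiplies $P(x_1)H(x_1)+V(x_1)\phi_2(x_1)=I_n$ from the left by $V^\dagger(x_1)$ to isolate $\phi_2 = V^\dagger(I_n-PH)$, then multiplies the same identity from the right by $M(x_1)^{-1}B(x_1)$ and invokes \eqref{eq:phi2-G=0} to pin down $P = M^{-1}B\Gamma^{-1}$ uniquely, which yields \eqref{fcn:phi2-def} at once; the ``up to invertible left transformation'' part is then established by the concrete change-of-basis computation $\tilde V = VR \Rightarrow \tilde\phi_2 = R^{-1}\phi_2$. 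You instead make the geometry explicit: you characterize the admissible $\phi_2(x_1)$ as exactly the full-row-rank matrices whose row space is the fixed subspace $\bigl(\im M(x_1)^{-1}B(x_1)\bigr)^\perp$, identify $\Pi = I_n - M^{-1}B\Gamma^{-1}H$ as the oblique projector onto $\ker H(x_1)$ along $\im M(x_1)^{-1}B(x_1)$, and recover \eqref{fcn:phi2-def} from $\phi_2\Pi=\phi_2$ and $\Pi z \in \im V$; uniqueness up to a left factor then falls out abstractly from equality of row spaces rather than from a computation. Your route is longer but buys two things the paper leaves implicit: a genuine converse (you verify that \emph{every} feasible $V$ inserted into \eqref{fcn:phi2-def} produces a $\phi_2$ satisfying both \eqref{eq:Phi-Jacobian} and \eqref{eq:phi2-G=0}, whereas the paper only proves the forward inclusion plus the $R$-transformation law), and a precise localization of where \ref{ass:hMB-regular} enters, namely in showing that \eqref{eq:phi2-G=0} together with full row rank already implies the invertibility condition in \eqref{eq:Phi-Jacobian}. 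The paper's approach buys brevity and avoids any discussion of projectors. Both proofs hinge on the same two identities extracted from \eqref{eq:PV-inverse} (namely $\phi_2 V = I_{n-m}$ and $\im V = \ker H$), so the difference is one of conceptual packaging rather than of substance.
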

\begin{proof} 
Assume that~\eqref{eq:Phi-Jacobian} and~\eqref{eq:phi2-G=0} hold, and hence we have~\eqref{eq:PV-inverse} for some corresponding~$P$ and~$V$. 
We multiply~\eqref{eq:PV-inverse} from the left by $V(x_1)^\dagger$ and subtract $V(x_1)^\dagger P(x_1) H(x_1)$ from both sides, and obtain
\begin{equation*}
\phi_2(x_1) = V^\dagger(x_1) \left(I_n - P(x_1) H(x_1)  \right),\quad x_1\in U_1.
\end{equation*}
Invoking~\eqref{eq:phi2-G=0}, we further obtain from~\eqref{eq:PV-inverse} that
\begin{align*}
P(x_1) & =  M(x_1)^{-1} B(x_1) \underset{= \Gamma(x_1)^{-1}}{\underbrace{\left( H(x_1) M(x_1)^{-1} B(x_1)  \right)^{-1}}},
\end{align*}
and hence~$P$ is uniquely determined by~$M, H, B$. Therefore,~$\phi_2$ is given as in~\eqref{fcn:phi2-def}. Furthermore, it follows from~\eqref{eq:PV-inverse} that
\[
 \begin{bmatrix}  H(x_1) \\ \phi_2(x_1) \end{bmatrix} [P(x_1), V(x_1)] = \begin{bmatrix} I_{m} &0\\0& I_{n-m} \end{bmatrix},
\]
from which we may deduce $\phi_2(x_1) V(x_1) = I_{n-m}$ and in addition $\im V(x_1) = \ker H(x_1)$. Hence, the representation of~$\phi_2$ in~\eqref{fcn:phi2-def} only depends on the choice of the basis of $\ker H(x_1)$. Now, let $\tilde V(x_1) := V(x_1) R(x_1)$, $x_1\in U_1$, for some $R: U_1\to \Gl_{n-m}$ and consider
\begin{align*}
\tilde \phi_2(x_1) &=  \tilde V^\dagger(x_1) \big(I_n -  M(x_1)^{-1} B(x_1) \Gamma(x_1)^{-1} H(x_1) \big).
\end{align*}
A short calculation shows $\tilde \phi_2(x_1) = R(x_1)^{-1} \phi_2(x_1)$ for all $x_1\in U_1$.
\end{proof}
Now, we continue deriving a representation of the internal dynamics. We choose $V \in \cC(U_1 \to \R^{n-m})$ with orthonormal columns such that~$\im V(x_1) = \ker H(x_1)$ for all~$x_1 \in U_1$.  Then via~\ref{ass:hMB-regular} and~\eqref{fcn:phi2-def} the inverse of
$
\begin{bmatrix}
H(x_1) \\ \phi_2(x_1)
\end{bmatrix}
$
is given by
\begin{equation}  \label{Cor:Inverse}
\begin{bmatrix}
H(x_1) \\ \phi_2(x_1)
\end{bmatrix}^{-1}
=
\begin{bmatrix}
M(x_1)^{-1} B(x_1) \Gamma(x_1)^{-1} & V(x_1)
\end{bmatrix}, \ x_1 \in U_1.
\end{equation}
Recall~\eqref{eq:eta-structure} and~\eqref{eq:xi-structure}, and choose~$\phi_2(x_1)$ as in~\eqref{fcn:phi2-def}. Then~\eqref{Cor:Inverse} yields
\begin{equation}
\begin{aligned} 
\begin{pmatrix} \xi_2 \\ \eta_2 \end{pmatrix} &= \begin{bmatrix} H(x_1) \\ \phi_2(x_1) \end{bmatrix} x_2 \\
\Rightarrow 
x_2 &= \begin{bmatrix} H(x_1) \\ \phi_2(x_1) \end{bmatrix}^{-1} \begin{pmatrix}  \xi_2 \\ \eta_2 \end{pmatrix}  \\
&\overset{\eqref{Cor:Inverse}}{=} M(x_1)^{-1} B(x_1) \Gamma(x_1)^{-1} \xi_2  + V(x_1)\eta_2.  \label{eq:x2}
\end{aligned}
\end{equation}
Therefore, using~\eqref{eq:ODE}, \eqref{eq:eta-structure} and~\eqref{eq:x2}, and identify~$\xi_2 = \dot y$, the dynamics of~$\eta_1$ are given by
\begin{align}
\dot{\eta}_1(t) &= \phi'_1(x_1(t)) x_2(t) \nonumber \\ 
&= \phi'_1(x_1(t)) M(x_1(t))^{-1} B(x_1(t)) \Gamma(x_1(t))^{-1}  \dot y(t) \nonumber  \\
& + \phi'_1(x_1(t)) V(x_1(t)) \eta_2(t) \label{eq:eta1-dynamik_x1_dependent},
\end{align}
where $\phi_1'(x_1)$ is of full row rank for all~$x_1 \in U_1$ by~\eqref{eq:Phi-Jacobian} but apart from that arbitrary. Again with~\eqref{eq:eta-structure} and~\eqref{eq:xi-structure}, since 
\begin{equation} \label{fcn:x1-diffeomorphism}
\begin{pmatrix} \xi_1 \\ \eta_1 \end{pmatrix} = \begin{pmatrix} h(x_1) \\ \phi_1(x_1) \end{pmatrix} =: \vp(x_1)
\end{equation}
is continuously differentiable and its Jacobian is regular on~$ U_1 \subseteq \R^n$ by~\eqref{eq:Phi-Jacobian}, via the inverse function theorem~$\vp$ defines a diffeomorphism on~$ U_1 $ and hence we have 
\begin{equation} \label{eq:x1}
x_1 = \vp^{-1}\left(\begin{pmatrix} \xi_1 \\ \eta_1 \end{pmatrix} \right). %((\xi_1^\top,\eta_1^\top)^\top).
\end{equation}
\begin{Cor} \label{Cor:phi1}
Assume~\ref{ass:hMB-regular} holds true and that there exists $E \in \R^{(n-m) \times n}$ such that $\phi_1(x_1) := E x_1$, $x_1 \in U_1$, satisfies~\eqref{eq:Phi-Jacobian}. Further, assume there exists $H \in \R^{m \times n}$ such that~$h$ is linear with $h(x_1) = Hx_1$ for all $x_1 \in U_1$, and let $V: U_1 \to \R^{n \times (n-m)}$ be such that 
$\im V = \ker H$. Then for $\vp: U_1 \to \R^n$ defined as in~\eqref{fcn:x1-diffeomorphism} we find that
\begin{equation*}
\vp(U_1) = \begin{bmatrix} H \\ E \end{bmatrix} U_1 =: W_1
\end{equation*}
and for all~$w_1 \in W_1$ we have
\begin{equation*}
\begin{aligned}
& \varphi^{-1}(w_1) = 
\begin{bmatrix} H \\ E \end{bmatrix}^{-1} w_1  \\
&= \begin{bmatrix}
H^\top (H H^\top)^{-1} - V(EV)^{-1}EH^\top(H H^\top)^{-1} & V(EV)^{-1}
%M(x_1)^{-1}B(x_1)\Gamma(x_1)^{-1} - V(x_1) (E V(x_1))^{-1} E M(x_1)^{-1}B(x_1)\Gamma(x_1)^{-1} & V(x_1) (E V(x_1))^{-1}
\end{bmatrix} w_1 
\end{aligned}
\end{equation*}
\end{Cor}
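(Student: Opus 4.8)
The plan is to reduce the entire statement to a single constant block-matrix inversion. Since both $h$ and $\phi_1$ are assumed linear, the map $\vp$ from~\eqref{fcn:x1-diffeomorphism} becomes $\vp(x_1) = \begin{bmatrix} H \\ E \end{bmatrix} x_1 =: N x_1$, where by~\eqref{eq:Phi-Jacobian} the constant matrix $N \in \R^{n \times n}$ lies in $\Gl_n(\R)$. The first assertion $\vp(U_1) = N U_1 = W_1$ is then immediate from linearity, and $\vp^{-1}(w_1) = N^{-1} w_1$ for all $w_1 \in W_1$. Hence the whole content of the corollary is the explicit formula for $N^{-1}$.

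To verify that formula I would denote the conjectured inverse by the block matrix $N^{-1} = [\,X \ \ Y\,]$ with
\[
X := H^\top(HH^\top)^{-1} - V(EV)^{-1}EH^\top(HH^\top)^{-1}, \qquad Y := V(EV)^{-1},
\]
and check $N\,[\,X \ \ Y\,] = I_n$ directly. Multiplying out, this reduces to the four block identities $HX = I_m$, $HY = 0$, $EX = 0$ and $EY = I_{n-m}$, which is what I would establish.

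Before that computation two auxiliary invertibility facts are needed. First, $HH^\top \in \Gl_m(\R)$, because $\rk H = m$ by assumption~\ref{ass:hMB-regular}. Second, $EV \in \Gl_{n-m}(\R)$: the hypothesis $\im V = \ker H$ together with $\dim \ker H = n-m$ forces $V$ to have full column rank $n-m$, and for any $z$ with $EVz = 0$ one gets $N(Vz) = \begin{bmatrix} HVz \\ EVz \end{bmatrix} = 0$ using $HV = 0$; invertibility of $N$ then yields $Vz = 0$, whence $z = 0$, so the square matrix $EV$ is injective and therefore invertible. The single structural ingredient driving the calculation is $HV = 0$, which is precisely the restatement of $\im V = \ker H$.

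With these in place the four identities fall out mechanically: $EY = EV(EV)^{-1} = I_{n-m}$ and $HY = HV(EV)^{-1} = 0$ are immediate; $HX = I_m$ holds because the subtracted summand carries a factor $HV = 0$; and $EX = 0$ because $EV(EV)^{-1} = I_{n-m}$ makes the two summands of $EX$ cancel. None of these steps is delicate once $HV = 0$ is invoked, so the only point deserving genuine care—and the step I expect to be the main obstacle—is the argument that $EV$ is invertible, which is exactly where the regularity hypothesis~\eqref{eq:Phi-Jacobian} enters.
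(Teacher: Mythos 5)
Your proof is correct and is exactly the verification the paper leaves implicit---the paper's own proof of this corollary is literally the single word ``Clear.'' Your reduction to the constant block matrix $N = \begin{smallbmatrix} H \\ E \end{smallbmatrix}$, the use of $HV = 0$ from $\im V = \ker H$, and the argument that $EV \in \Gl_{n-m}(\R)$ (the only genuinely non-obvious point, which you correctly derive from the invertibility of $N$ in~\eqref{eq:Phi-Jacobian} and the full column rank of $V$) supply precisely the details the paper treats as obvious.
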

\begin{proof}
Clear.
\end{proof}
To combine~\eqref{fcn:x1-diffeomorphism} with~\eqref{eq:eta1-dynamik_x1_dependent} we define the following functions on~$W_1 := \vp(U_1)$ as concatenations
\begin{align*}
\phi'_{1,\vp}(\cdot) &:=  \big(\phi'_1 \circ \vp^{-1} \big)(\cdot), & M_{\vp}(\cdot)^{-1} &:= \big(M \circ \vp^{-1} \big)(\cdot)^{-1}, \\
B_{\vp}(\cdot) &:= \big(B \circ \vp^{-1} \big)(\cdot), & H_{\vp}(\cdot) &:= \big(H \circ \vp^{-1} \big)(\cdot), \\ 
\Gamma_{\vp}(\cdot)^{-1} &:= \big(\Gamma \circ \vp^{-1} \big)(\cdot)^{-1}, &V_{\vp}(\cdot) &:= \big(V \circ \vp^{-1} \big)(\cdot). &
\end{align*}
Therefore, identifying~$\xi_1 = y$ we obtain a representation of~\eqref{eq:eta1-dynamik_x1_dependent} as the first equation in~\eqref{eq:eta-dynamics}.
Now, we explore the dynamics of~$\eta_2$. 
Define $\phi'_2[x_1,x_2] := \tfrac{\partial}{\partial x_1} [\phi_2(x_1) \cdot x_2] \in \R^{(n-m) \times n }$ 
for $x = (x_1^\top,x_2^\top)^\top \in U_1 \times \R^n = U$. 
Then from~\eqref{eq:ODE} and~\eqref{eq:eta-structure} we obtain for $x \in U$
\begin{align*}
\dot \eta_2(t) &= \phi'_2[x_1(t),x_2(t)]x_2(t) \\ & + \phi_2(x_1(t)) M(x_1(t))^{-1} f(x_1(t),x_2(t)).
\end{align*}
Let $\phi_{2,\vp}(\cdot) :=  \big(\phi_2 \circ \vp^{-1} \big)(\cdot)$ on~$W_1$ and for $w \in W_1$ and $v \in \R^n$ we define
\[
\phi'_{2,\vp}[w,v] := \phi'_2 \Bigg[\vp^{-1}(w), \begin{bmatrix}H_\vp(w) \\ \phi_{2,\vp}(w) \end{bmatrix}^{-1}  v \Bigg],
\]
\[
f_{\vp}(w,v) := f \Bigg(\vp^{-1}(w), \begin{bmatrix} H_{\vp}(w) \\ \phi_{2,\vp}(w) \end{bmatrix}^{-1}  v \Bigg).
\]
Then, as the first main result of the present article, the internal dynamics of~\eqref{eq:ODE} are given in~\eqref{eq:eta-dynamics}.
\begin{figure*}
\fbox{
\parbox{\linewidth}{
\begin{equation}
\begin{aligned} \label{eq:eta-dynamics}
\dot \eta_1(t) &=
\phi'_{1,\vp} \bigg(\begin{pmatrix} y(t) \\ \eta_1(t) \end{pmatrix}\bigg) 
M_\vp\bigg(\begin{pmatrix} y(t) \\ \eta_1(t) \end{pmatrix}\bigg)^{-1} 
 B_\vp\bigg(\begin{pmatrix} y(t) \\ \eta_1(t) \end{pmatrix}\bigg)
  \Gamma_\vp\bigg(\begin{pmatrix} y(t) \\ \eta_1(t) \end{pmatrix}\bigg)^{-1} \dot y(t) \\
  &+ 
  \phi'_{1,\vp}\bigg(\begin{pmatrix} y(t) \\ \eta_1(t) \end{pmatrix}\bigg) 
  V_\vp\bigg(\begin{pmatrix} y(t) \\ \eta_1(t) \end{pmatrix}\bigg) \eta_2(t),   \\
\dot \eta_2(t) &= 
\phi'_{2,\vp} \Bigg[ \begin{pmatrix} y(t) \\ \eta_1(t) \end{pmatrix}, \begin{pmatrix} \dot y(t) \\ \eta_2(t) \end{pmatrix} \Bigg]
\Bigg( M_\vp\bigg(\begin{pmatrix} y(t) \\ \eta_1(t) \end{pmatrix}\bigg)^{-1} B_\vp\bigg(\begin{pmatrix} y(t) \\ \eta_1(t) \end{pmatrix}\bigg) 
\Gamma_\vp\bigg(\begin{pmatrix} y(t) \\ \eta_1(t) \end{pmatrix}\bigg)^{-1} \dot y(t) + V_\vp\bigg(\begin{pmatrix} y(t) \\ \eta_1(t) \end{pmatrix}\bigg) \eta_2(t) \Bigg) \\   
&+ \phi_{2,\vp}\bigg(\begin{pmatrix} y(t) \\ \eta_1(t) \end{pmatrix}\bigg) 
M_\vp\bigg(\begin{pmatrix} y(t) \\ \eta_1(t) \end{pmatrix}\bigg)^{-1} 
f_\vp \bigg(\begin{pmatrix} y(t) \\ \eta_1(t) \end{pmatrix},  \begin{pmatrix} \dot y(t) \\ \eta_2(t) \end{pmatrix} \bigg).
\end{aligned}
\end{equation}
}}
\end{figure*}
\begin{Rem}
The set of variables presented in this section to decouple the internal dynamics of~\eqref{eq:ODE} offers an alternative to the Byrnes-Isidori form as in~\eqref{eq:abstract-BIF}, whose computation often requires a lot of effort. 
%\tb{{
The advantage of the representation of the internal dynamics in~\eqref{eq:eta-dynamics} is, that it involves only system parameters. 
Lemma~\ref{Lem:phi2} shows, that
a suitable choice of V is given by $\im V(x_1) = \ker H(x_1)$,
and solving~\eqref{eq:x1} for~$x_1$ only involves~$h(x_1)$, 
where the choice $\phi_1(x_1) = E x_1$ as in the proof of Lemma~\ref{Lem:existence-phi1/2} may simplify the computation further.
We stress, that once~$\phi_1$ is chosen, e.g. $\phi_1(x_1) = E x_1$, the computation of the internal dynamics of a system~\eqref{eq:ODE} can be carried out completely algorithmically without the need to choose further functions. 
%}}
\end{Rem}
\begin{Rem} \label{Rem:phi2-conservative}
We may obtain further structure for the internal dynamics~\eqref{eq:eta-dynamics}. 
Recall the concept of a conservative vector field. We call a vector field $J : U \to \R^n$, $U \subseteq \R^n$ open, conservative, if there exists a scalar field $j : U \to \R$ such that $j'(x) = J(x)^\top$ for all $x \in U$. Now, if there exist 
$j_i \in \cC^1(U_1 \to \R)$ such that 
$j_i'(x_1) = \phi_{2,i}(x_1)^\top$ for all $x_1 \in U_1$, $i=1,\dots,n-m$, then it is possible to choose 
$\phi_1 = (\lambda_1 j_1 ,\dots,\lambda_{n-m} j_{n-m})^\top$ for some $\lambda_i\in\R\setminus\{0\}$, $i=1, \dots, n-m$, and thus 
$\phi_1'(x_1) = \Lambda \phi_2(x_1)$ for $\Lambda = \diag(\lambda_1,\dots,\lambda_{n-m})$. Therefore, using~\eqref{eq:phi2-G=0} and Lemma~\ref{Lem:phi2} the dynamics of~$\eta_1$ in~\eqref{eq:eta-dynamics} reduce to
\begin{equation*}
\dot \eta_1(t) = \Lambda \eta_2(t).
\end{equation*}
Note, that the entries of~$\Lambda$ can be chosen at will. 
We will make use of this later in Section~\ref{Subsec:StabCond}.
\end{Rem}

\section{Stability of the internal dynamics} \label{Sec:Stability-of-ID}
In this section we derive sufficient conditions on the system parameters
of a certain class of systems with constant mass matrix,
such that the internal dynamics are bounded-input, bounded-output stable.
These conditions can be verified in advance and hence an explicit decoupling and stability analysis of the internal dynamics is not necessary; in particular, it is not necessary to derive~\eqref{eq:eta-dynamics}.

\subsection{Abstract stability result}
Before we give conditions on~$f$ in~\eqref{eq:ODE} to ensure a bounded-input, bounded-output stability of the internal dynamics~\eqref{eq:eta-dynamics} we present an abstract stability result. To this end, we make the following definitions.
\begin{Def} 
Consider a dynamical control system
\begin{equation} \label{eq:abstract-control-system}
\dot x(t) = f(x(t),u(t)), \quad x(0) = x^0 \in \R^n,
\end{equation}
where $f \in \cC(X \times \R^m \to \R^n)$, $X \subseteq \R^n$ open, and $u \in \cL^\infty(\rp \to \R^m)$. 
A local solution of~\eqref{eq:abstract-control-system} is a function $x \in \cC^1([0,\omega) \to \R^n)$, $\omega \in \rp$ such that it satisfies~\eqref{eq:abstract-control-system} on~$[0,\omega)$ with~$x(0) = x^0 \in \R^n$. If $\omega = \infty$ we call $x \in \cC^1(\rp \to \R^n)$ a global solution.
\end{Def}
\begin{Def}
We call a function $f: \R^n \to \R$ radially unbounded, if $f(x) \to \infty$ for $\| x \| \to \infty$.
\end{Def}
Now, following~\cite[Thm. 4]{LaSa60}, we may formulate the second main result of the present paper as the following theorem.
\begin{Thm} \label{Thm:Boundedness-ID}
Let~$U \subseteq \R^{n-m}$ be open and consider  
\begin{equation} \label{eq:abstract-CS}
\dot \zeta(t) = \Psi(\zeta(t), y_1(t), y_2(t)), \quad \zeta(0) = \zeta^0 \in U,
\end{equation} 
where~$\Psi \in \cC(U \times \R^{m} \times \R^m \to \R^{n-m})$. % satisfies a Lipschitz condition in the first argument.
Assume there exist $r_1, r_2, r_3 >0 $ and a radially unbounded $\cV \in \cC^1(U \to \R)$
%, continuously differentiable function $\cV: U \to \R$,
such that
for all $ y_1 \in B_{r_1}(0)$, $ y_2 \in B_{r_2}(0)$ and for all $\zeta \in \{ \zeta \in U \ \vline \ \|\zeta\| > r_3 \} $
we have
\begin{equation} \label{eq:Thm_Lyapunov}
\cV'(\zeta) \cdot \Psi(\zeta,y_1,y_2) \leq 0. 
\end{equation}
Then for all $y_1, y_2 \in \cL^{ \infty}(\rp \to \R^m)$ with $\| y_1 \|_{\infty} \leq r_1$ and $\|  y_2 \|_{\infty} \leq r_2$ and all global solutions $\zeta : \R_{\ge 0} \to U$ of~\eqref{eq:abstract-CS} 
there exists $\ve > 0$ such that 
\begin{equation*}
\| \zeta \|_\infty \le \max\{\|\zeta^0\|, r_3 \} + \ve .
\end{equation*}
\end{Thm}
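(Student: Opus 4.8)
The plan is to run a Lyapunov sublevel-set argument in the spirit of LaSalle. Fix inputs $y_1, y_2 \in \cL^\infty(\rp \to \R^m)$ with $\|y_1\|_\infty \le r_1$, $\|y_2\|_\infty \le r_2$, and a global solution $\zeta: \rp \to U$ of~\eqref{eq:abstract-CS}. First I would upgrade the hypothesis~\eqref{eq:Thm_Lyapunov} from the open balls to their closures: for a fixed $\zeta$ with $\|\zeta\| > r_3$ the map $(y_1, y_2) \mapsto \cV'(\zeta) \cdot \Psi(\zeta, y_1, y_2)$ is continuous and nonpositive on $B_{r_1}(0) \times B_{r_2}(0)$, hence nonpositive on the closed balls as well. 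This step is needed because $\|y_i\|_\infty \le r_i$ only guarantees $\|y_i(t)\| \le r_i$ for almost every $t$, i.e.\ possibly on the boundary of the ball.

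Next I would differentiate $\cV$ along the trajectory. Since $\zeta$ is a solution and $\cV \in \cC^1(U \to \R)$, the map $t \mapsto \cV(\zeta(t))$ is locally absolutely continuous and satisfies, for almost every $t$,
\[
\ddt \cV(\zeta(t)) = \cV'(\zeta(t)) \cdot \dot\zeta(t) = \cV'(\zeta(t)) \cdot \Psi(\zeta(t), y_1(t), y_2(t)).
\]
Combining this with the closure extension, for almost every $t$ at which $\|\zeta(t)\| > r_3$ we obtain $\ddt \cV(\zeta(t)) \le 0$. The content of this step is that $\cV$ cannot increase while the trajectory stays outside the ball of radius $r_3$; passing to the integral form then yields monotonicity of $t \mapsto \cV(\zeta(t))$ on any interval on which $\|\zeta(\cdot)\| > r_3$.

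Now I would fix the threshold level $c_0 := \max\{\cV(\zeta^0),\, \max_{\|\zeta\| \le r_3} \cV(\zeta)\}$ and establish the invariance claim $\cV(\zeta(t)) \le c_0$ for all $t \ge 0$. Arguing by contradiction, if $\cV(\zeta(t_1)) > c_0$ for some $t_1$, then since $\cV(\zeta(0)) \le c_0$ there is a last crossing time $t_0 < t_1$ with $\cV(\zeta(t_0)) = c_0$ and $\cV(\zeta(t)) > c_0$ on $(t_0, t_1]$. On that interval $\cV(\zeta(t)) > c_0 \ge \max_{\|\zeta\| \le r_3} \cV(\zeta)$ forces $\|\zeta(t)\| > r_3$, so by the previous step $\cV(\zeta(\cdot))$ is nonincreasing there, contradicting $\cV(\zeta(t_1)) > \cV(\zeta(t_0))$. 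Finally, invoking radial unboundedness of $\cV$, the sublevel set $\{\zeta \in U : \cV(\zeta) \le c_0\}$ is bounded, say contained in a ball of radius $R' < \infty$; hence $\|\zeta(t)\| \le R'$ for all $t$, and choosing any $\ve > 0$ with $R' \le \max\{\|\zeta^0\|, r_3\} + \ve$ gives the assertion.

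I expect the main obstacle to be the rigorous handling of the differentiation step for merely $\cL^\infty$ inputs, so that $\ddt \cV(\zeta(t))$ equals the stated expression only almost everywhere and monotonicity must be read off the integral rather than pointwise; this has to be dovetailed with the boundary/closure extension of~\eqref{eq:Thm_Lyapunov}. A secondary technical point is the book-keeping that makes $c_0$ and the sublevel set well defined, namely that the closed ball of radius $r_3$ lies in $U$ and that radial unboundedness is understood relative to $U$ so that bounded sublevel sets are guaranteed.
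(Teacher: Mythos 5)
Your proposal is correct and follows essentially the same route as the paper's own proof: a last-crossing-time contradiction exploiting that $\cV$ is nonincreasing along the trajectory while $\|\zeta(\cdot)\|>r_3$, finished off by radial unboundedness --- the paper merely tracks the norm threshold $\tilde r=\max\{\|\zeta^0\|,r_3\}$ and picks $\ve$ so that $\cV>\nu_0$ outside $B_{\tilde r+\ve}(0)$, whereas you track the Lyapunov level $c_0$ and instead bound the sublevel set $\{\zeta\in U \ \vline \ \cV(\zeta)\le c_0\}$, which is the mirror image of the same argument. Your two refinements --- extending \eqref{eq:Thm_Lyapunov} from the open balls $B_{r_i}(0)$ to their closures (needed since $\|y_i\|_\infty\le r_i$ permits boundary values) and the a.e./integral treatment of $\ddt\cV(\zeta(t))$ --- together with the flagged caveat that the relevant compact sets must lie inside $U$, are points the paper's proof silently glosses over, so no gap relative to the paper's own standard of rigor.
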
 

\begin{proof}
The following proof is inspired by the proof made in \cite[Thm. 4]{LaSa60}, and some ideas are adopted from the proof in \cite[Lem. 5.7.8]{Sont98a}.
Let $\tilde r := \max\{\|\zeta^0\|, r_3\}$ and $\nu_0 := \max \{\cV(\zeta) \ \vline \ \|\zeta\| = \tilde r \}$. 
Since~$\cV$ is radially unbounded there exists $\ve>0$ such that $\cV(\zeta) > \nu_0$ for all $\zeta \in \{ \zeta \in U \ \vline \ \| \zeta \| \ge \tilde r + \ve \}$.
Seeking a contradiction, we assume there exists $t_1$ such that $\| \zeta(t_1) \| > \tilde r + \ve $. 
Let $t_0 = \max \{ t \in [0,t_1) \ \vline \ \| \zeta(t) \| = \tilde r \} $.
Then we have $\cV(\zeta(t)) > \nu_0$ for all $t \in (t_0,t_1]$.
Since by~\eqref{eq:Thm_Lyapunov} $\cV$ is nonincreasing along solution trajectories of~\eqref{eq:abstract-CS} for all~$y_1, y_2 \in \cL^\infty(\rp \to \R^m)$ with~$\|y_1\|_\infty \leq r_1$, $\| y_2 \|_\infty \leq r_2$ and $\| \zeta \| > r_3$ we have 
$\nu_0 < \cV(\zeta(t_1)) \le \cV(\zeta(t_0)) \le \nu_0$.
A contradiction. Therefore, we conclude $\| \zeta(t) \| \le \tilde r + \ve$ for all $t \ge 0$ and hence $\| \zeta \|_\infty \le \tilde r + \ve$.
\end{proof}
\subsection{Sufficient conditions for stability} \label{Subsec:StabCond}
In this section we consider a special class of nonlinear multibody systems, namely such systems with constant mass matrix, constant input distribution and a linear system output, and  present sufficient conditions on the system parameters, i.e., on~$f$ in~\eqref{eq:ODE}, such that the internal dynamics are bounded-input, bounded-output stable. 
\begin{Def} \label{Def:BIBO}
Consider a control system~\eqref{eq:abstract-control-system} with output $y(t) = h(x(t))$, where $h \in \cC^1(\R^n \to \R^m)$.
We call a system~\eqref{eq:abstract-control-system} bounded-input, bounded-output stable, if there exist $C_1, C_2 \ge 0$ such that
\begin{equation*}
\| u \|_\infty \le C_1 \Rightarrow \| y \|_\infty \le C_2.
\end{equation*}
\end{Def}
\begin{Rem}
In the context of the internal dynamics~\eqref{eq:eta-dynamics} of system~\eqref{eq:ODE} we actually consider bounded-input, bounded-state stability. 
As we add the output $z(t) = \eta(t)$ to system~\eqref{eq:eta-dynamics} we may refer to it as bounded-input, bounded-output stable, where the output of system~\eqref{eq:ODE}, namely $y(\cdot)$, and its derivative $\dot y(\cdot)$ play the role of the input of system~\eqref{eq:eta-dynamics}.
\end{Rem}
Now, we consider a system~\eqref{eq:ODE} with constant mass matrix $M \in \Gl_n(\R)$ and constant input distribution $B \in \R^{n \times m}$. 
Since in many applications the output function~$y(\cdot)$ is linear, we assume~$h$ to be linear, i.e., $h(x_1) = H \cdot x_1$ for all $x_1 \in U_1$ with $H \in \R^{m \times n}$.
Under these assumptions we have $U_1 = \R^n$ and~\ref{ass:hMB-regular} becomes $H M^{-1} B \in \Gl_m(\R)$, i.e., $\Gamma \in \Gl_m(\R)$.
We revisit system~\eqref{eq:eta-dynamics} to obtain a simpler representation of the internal dynamics. 
First, we observe that since $H,M,B$ are constant matrices we have that
\begin{equation} \label{fn:phi2-constant}
%\phi_2 = LM \in \R^{(n-m) \times n}
\phi_2 = V^\dagger \big( I_n - M^{-1} B \Gamma^{-1} H \big) \in \R^{(n-m) \times n}
\end{equation}
is a constant matrix, where %$L \in \R^{ (n-m) \times n}$ such that $\im L = \ker B^\top$.
$V \in \R^{n \times (n-m)}$ such that $\im V = \ker H$
 and~$\Gamma = H M^{-1} B \in \Gl_m(\R)$.
Thence, $\phi'_2[x_1,x_2]=0$ and $U = \R^{2n}$. Furthermore, 
%given that~$\phi_2(x_1) = \phi_2 \in \R^{(n-m) \times n}$ is constant, 
$\phi_2$ defines a conservative vector field and thus via Remark~\ref{Rem:phi2-conservative} we may choose $\phi_1(x_1) = \lambda \phi_2 \cdot x_1$, $x_1 \in \R^n$ and $\lambda \in \R\setminus\{0\}$. 
Moreover, since~$h$ is linear we have $V \in \R^{n \times (n-m)}$ such that $\im V = \ker H$.
Therefore, via~\eqref{eq:x2} and~\eqref{fcn:x1-diffeomorphism} we obtain
\begin{align}
%x_1 &= \begin{bmatrix} H \\ \lambda \phi_2 \end{bmatrix}^{-1} \begin{pmatrix} y \\ \eta_1 \end{pmatrix} = M^{-1} B \Gamma^{-1}  y + \lambda^{-1} V \eta_1 
x_1 &= \begin{bmatrix} H \\ \lambda \phi_2 \end{bmatrix}^{-1} \begin{pmatrix} \xi_1 \\ \eta_1 \end{pmatrix} = M^{-1} B \Gamma^{-1}  \xi_1 + \lambda^{-1} V \eta_1
\label{eq:x1-const} ,\\
%x_2 &= \begin{bmatrix} H \\ \ \phi_2 \ \end{bmatrix}^{-1} \begin{pmatrix} \dot y \\ \eta_2 \end{pmatrix} = M^{-1} B \Gamma^{-1} \dot y +  V \eta_2 
x_2 &= \begin{bmatrix} H \\ \ \phi_2 \ \end{bmatrix}^{-1} \begin{pmatrix} \xi_2 \\ \eta_2 \end{pmatrix} = M^{-1} B \Gamma^{-1} \xi_2 +  V \eta_2
\label{eq:x2-const}.
\end{align}
We define $\cM := M^{-1} B \Gamma^{-1} \in \R^{n \times m}$ and $\Theta := \phi_2 M^{-1}$. Then, combining the aforementioned observations, equations~\eqref{eq:eta-dynamics} for the internal dynamics simplify to
\begin{align*}
\dot{\eta_1}(t) &= \lambda \eta_2(t) , \\
\dot{\eta_2}(t) &= \Theta f \big( \cM  y(t) + \lambda^{-1} V \eta_1(t), \, \cM \dot y(t) +  V \eta_2(t)\big),
\end{align*}
where we identified $\xi_1 = y$, $\xi_2 = \dot y$, and the arguments of $f(x_1,x_2)$ have been substituted via~\eqref{eq:x1-const} and~\eqref{eq:x2-const}, resp.
Now, we assume~$f$ has the structure
\begin{equation*}
f(x_1,x_2) = -K(x_1) - D_2(x_2) - C(x_1,x_2) x_2,
\end{equation*}
where~$K \in \cC(\R^n \to \R^n)$ may be considered as a nonlinear restoring force, 
$D \in \cC( \R^n \to \R^n)$ for example mimics a nonlinear damping
or friction
and $C \in \cC( \R^n \times \R^n \to \R^{n \times n})$ may take the role of a nonlinear distribution for a position dependent damping or mimic a Coriolis force. 
With this we revisit system~\eqref{eq:ODE} and obtain the following control system
\begin{small}
\begin{align} \label{eq:system-ODE-with-f-structure}
\dot x(t) &= \begin{pmatrix} x_2(t) \\ M^{-1} \big( -K(x_1(t)) - D(x_2(t)) - C(x_1(t),x_2(t))x_2(t) \big) \end{pmatrix} \nonumber \\
&+ \begin{bmatrix} 0 \\ M^{-1} B \end{bmatrix} u(t) , \nonumber \\
y(t) &= H x_1(t).
\end{align}
\end{small}
We define the vector field~$\cF$ in~\eqref{def:cF}.
\begin{figure*}[h!tb]
\begin{align} \label{def:cF}
\cF : \R^{2(n-m)} \times \R^{2m} & \to \R^{2(n-m)} \\
(z_1, z_2, v_1, v_2) & \mapsto 
\begin{pmatrix} 
\lambda z_2 \\
 \Theta \left(- K(\cM v_1 + \lambda^{-1} V z_1) - D(\cM v_2 + V z_2) - C(\cM v_1 + \lambda^{-1} V z_1, \cM v_2 + V z_2)(\cM v_2 + V z_2) \right)
 \end{pmatrix} \nonumber
\end{align}
\end{figure*}
Then the internal dynamics of~\eqref{eq:system-ODE-with-f-structure} are given via
\begin{equation} \label{eq:ID_constant_matrices}
\dot \eta(t) = \cF\big(\eta_1(t), \eta_2(t), y(t), \dot y(t)\big).
\end{equation}
Henceforth let~$V$ have orthonormal columns.
For $i=1,2$ we assume, that there exist $z_i^+ > 0$ such that
for all $z_i \in Z_i := \{ z \in \R^{n-m} \ \vline \ \|z\| > z_i^+ \}$ and all $v,w \in \R^n$ 
the functions~$K$, $ D$ and~$C$ satisfy the following conditions %\eqref{eq:K-cons}--\eqref{eq:D2-Lipschitz} 
\begin{subequations}
\begin{align}
\text{There ex. a rad. unbounded } & V_K \in \cC^1( Z_1 \to \R) \nonumber \\ 
 \text{such that} \ V_K'(z_1) &= \left(\Theta K(V z_1)\right)^\top , \label{eq:K-cons} \\
\|K(V z_1)-K( V z_1+w)\| &\leq g_1(w), \label{eq:K-upper} \\
z_1^\top \Theta K(V z_1) &\geq \kappa \|z_1\|^2 , \label{eq:K-lower} \\
\|D(V z_2) - D(V z_2+w)\| &\leq g_2(w),  \label{eq:D2-upper} \\
z_2^\top \Theta D(V z_2) &\geq \delta \|z_2\|^2, \label{eq:D2-lower} \\
\|D( V z_2)\| &\leq d \|z_2\|, \label{eq:D2-Lipschitz} \\
\|(C(V z_1,w)- C(V z_1+v,& w))  w\|  \label{eq:C-upper1} \\
&\le g_3(v) a_3(w) , \nonumber \\
z_1^\top \Theta C(V z_1,w) w &\ge \|z_1\|^2 b_3(w),  \label{eq:C-lower1} \\
\|(C(v, V z_2) -  C (v,V z_2  + & w)) (V z_2 + w)\| \label{eq:C-upper2}   \\
& \le  g_4(w)  a_4(v)  \|z_2 \| , \nonumber \\ 
z_2^\top \Theta C(v,Vz_2) V z_2 &\ge b_4(v) \|z_2\| \|Vz_2\|^2 , \label{eq:C-lower2} \\
\| C(v, V z_2) w \| &\le  a_4(v) \|z_2\| \|w\|, \label{eq:C-Lipschitz} 
\end{align}
\end{subequations}
for suitable functions $a_j,b_j,g_i \in \cC(\R^n \to \rp)$, $i = 1,\ldots,4$, $j=3,4$,
with $ a_i(x) \le b_i(x)$ for all $x \in \R^n$, and $\kappa, \delta, d > 0$. 
From this it is clear that conditions~\eqref{eq:K-cons}--\eqref{eq:C-Lipschitz} mean that the acting forces are assumed to be basically linear in a certain region, far away from the origin. Hence these are merely weak assumptions.\\
We set $\tau := \|\Theta\|$ and for some~$r_1, r_2 \geq 0$, $q >0$ we define the following constants
\begin{equation} \label{def:constants}
\begin{aligned}
\tilde K &:= \max_{z \in B_{r_1}(0)} g_1(\cM z), \  && \tilde D := \max_{z \in B_{r_2}(0)} g_2(\cM z), \\
\tilde C_3 &:= \max_{z \in B_{r_1}(0)} g_3(\cM z), \ && \tilde C_4 := \max_{z \in B_{r_2}(0)} g_4(\cM z), \\
\varepsilon_1 &:= q(\tfrac{\kappa}{\lambda} - \tfrac{\tau^2}{2}), \  && \varepsilon_2 := \delta - q( \tfrac{d^2}{2} + \lambda), \\
E_1 &:= q \tau (\tilde K +  d \|\cM\| r_2), \ && E_2 := \tau (\tilde K + \tilde D  ),
\end{aligned}
\end{equation}
which are all nonnegative with the feasible choice $0 < \lambda < 2\kappa/\tau^2$ and $0 < q < 2 \delta/( d^2 + 2\lambda )$.
Further, we define
\begin{equation*} %\label{def:Z_i}
\begin{aligned}
\gamma_1 &:= \lambda \tau \tilde C_3 \ge 0, \ \gamma_2:= \tau (\tilde C_4 +  \mu r_2 ) \ge 0 , \\
\tilde Z_i & := \setdef{ z \in \R^{n-m} }{ \|z\| > \max \left\{ \frac{E_i}{\ve_i}, \gamma_i \right\} }, \ i=1,2.
\end{aligned}
\end{equation*}
As the third main result we present an explicit Lyapunov function for system~\eqref{eq:ID_constant_matrices} in the following theorem.
\begin{Thm} \label{Thm:V-is-a-Lyapunov-function}
Consider system~\eqref{eq:ID_constant_matrices} and fix $r_1,r_2 \ge 0$, $0 < \lambda < 2\kappa/\tau^2 $ and $0 < q < 2 \delta/( d^2 + 2\lambda )$.
Assume conditions~\eqref{eq:K-cons}--\eqref{eq:C-Lipschitz} hold true for all~$z_i \in Z_i$, $i=1,2$. 
Then for~$\cV: \R^{n-m} \times \R^{n-m} \to \R$ defined by
\begin{equation} \label{fcn:V}
\cV(\eta_1,\eta_2) = \frac{1}{2}\|\eta_2\|^2 + q \eta_1^\top \eta_2 +  V_K( \lambda^{-1} \eta_1) 
\end{equation}
the Lie derivative along the vector field~$\cF$ from~\eqref{eq:ID_constant_matrices} is nonincreasing for all $y_1 \in B_{r_1}(0)$, $y_2 \in B_{r_2}(0)$ and all $\eta_i \in Z_i \cap \tilde Z_i$, $i=1,2$, i.e.,
\begin{equation} \label{eq:Lyapunov_leq_0}
\cV'(\eta_1, \eta_2) \cdot \cF(\eta_1, \eta_2, y_1,  y_2) \le 0 .
\end{equation}
\end{Thm}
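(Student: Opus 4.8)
The plan is to verify \eqref{eq:Lyapunov_leq_0} by computing the Lie derivative $\cV'\cdot\cF$ directly and estimating it term by term. Using \eqref{eq:K-cons}, the gradient of $\cV$ from \eqref{fcn:V} splits as $\nabla_{\eta_1}\cV = q\eta_2 + \lambda^{-1}\Theta K(\lambda^{-1}V\eta_1)$ and $\nabla_{\eta_2}\cV = \eta_2 + q\eta_1$; pairing these with the two blocks of $\cF$ in \eqref{def:cF} gives
\[
\begin{aligned}
\cV'\cdot\cF &= q\lambda\|\eta_2\|^2 + \eta_2^\top\Theta K(\lambda^{-1}V\eta_1) \\
&\quad - (\eta_2 + q\eta_1)^\top\Theta\big(K(a) + D(b) + C(a,b)b\big),
\end{aligned}
\]
where I abbreviate $a := \cM y_1 + \lambda^{-1}V\eta_1$ and $b := \cM y_2 + V\eta_2$, which are exactly the original states $x_1,x_2$ after the coordinate change, and I set $z_1 := \lambda^{-1}\eta_1$, $z_2 := \eta_2$, so that $Vz_1 = \lambda^{-1}V\eta_1$, $Vz_2 = V\eta_2$ and the hypotheses \eqref{eq:K-cons}--\eqref{eq:C-Lipschitz} become applicable. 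Since $V$ has orthonormal columns, $\|Vz_i\| = \|\eta_i\|$ throughout.

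I would then split the restoring-force and damping contributions around the unperturbed arguments $Vz_1,Vz_2$. The dissipative parts $-q\eta_1^\top\Theta K(Vz_1)$ and $-\eta_2^\top\Theta D(Vz_2)$ are bounded above by $-q\tfrac{\kappa}{\lambda}\|\eta_1\|^2$ and $-\delta\|\eta_2\|^2$ via \eqref{eq:K-lower} and \eqref{eq:D2-lower}; the single genuinely mixed term $-q\eta_1^\top\Theta D(Vz_2)$, estimated by $q\tau d\|\eta_1\|\|\eta_2\|$ using \eqref{eq:D2-Lipschitz}, is split by Young's inequality into $\tfrac{q\tau^2}{2}\|\eta_1\|^2 + \tfrac{qd^2}{2}\|\eta_2\|^2$, which together with $q\lambda\|\eta_2\|^2$ assembles exactly the quadratic part $-\ve_1\|\eta_1\|^2 - \ve_2\|\eta_2\|^2$. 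The remaining differences $K(a)-K(Vz_1)$ (appearing against both $\eta_2$ and $q\eta_1$), $D(b)-D(Vz_2)$, and the growth $\|D(b)\|$ are controlled by \eqref{eq:K-upper}, \eqref{eq:D2-upper} and \eqref{eq:D2-Lipschitz} together with the input bounds $\|y_i\|\le r_i$, producing the linear part $E_1\|\eta_1\| + E_2\|\eta_2\|$ with the constants of \eqref{def:constants}. The feasibility window $0<\lambda<2\kappa/\tau^2$, $0<q<2\delta/(d^2+2\lambda)$ is exactly what makes $\ve_1,\ve_2>0$, and on $\tilde Z_i$ one has $\|\eta_i\|>E_i/\ve_i$, so that $-\ve_i\|\eta_i\|^2 + E_i\|\eta_i\| \le 0$.

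The main obstacle is the Coriolis contribution $-(\eta_2 + q\eta_1)^\top\Theta C(a,b)b$, since both arguments of $C$ mix internal states with the inputs; I would handle its two pieces separately. For $-q\eta_1^\top\Theta C(a,b)b$ I peel off $C(Vz_1,b)b$, bounded below by $\|z_1\|^2 b_3(b)$ via \eqref{eq:C-lower1}, and estimate the shift $C(a,b)-C(Vz_1,b)$ by $\tilde C_3\,a_3(b)$ via \eqref{eq:C-upper1}; using $a_3\le b_3$, this piece becomes nonpositive once $\|\eta_1\|>\gamma_1=\lambda\tau\tilde C_3$. Symmetrically, for $-\eta_2^\top\Theta C(a,b)b$ I isolate $C(a,Vz_2)Vz_2$, bounded below via \eqref{eq:C-lower2}, absorb the offset $C(a,Vz_2)\cM y_2$ with \eqref{eq:C-Lipschitz} and the argument shift with \eqref{eq:C-upper2}, so that, using $a_4\le b_4$, this piece is nonpositive once $\|\eta_2\|>\gamma_2$. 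Collecting the three groups, on $Z_i\cap\tilde Z_i$ (where the hypotheses hold and $\|\eta_i\|>\max\{E_i/\ve_i,\gamma_i\}$) every group is $\le 0$, which is \eqref{eq:Lyapunov_leq_0}. The delicate points are consistent bookkeeping of the offsets $\cM y_i$ and of the factor $\lambda^{-1}$ distinguishing $z_1$ from $\eta_1$, and checking that the sign conditions $a_i\le b_i$ really make the Coriolis remainders dominated rather than dominating.
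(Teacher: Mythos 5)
Your proposal is correct and follows essentially the same route as the paper's proof in~\ref{Proof:Thm-V-is-a-Lyapunov-function}: the same splitting of the Lie derivative via the conservative term~\eqref{eq:K-cons}, the same difference-plus-dissipation decompositions around $Vz_1, Vz_2$ using \eqref{eq:K-upper}--\eqref{eq:D2-Lipschitz} with Young's inequality to assemble $-\ve_1\|\eta_1\|^2-\ve_2\|\eta_2\|^2 + E_1\|\eta_1\|+E_2\|\eta_2\|$, and the same peeling of the Coriolis shifts in each argument (with $a_i\le b_i$) to make those remainders nonpositive on $\|\eta_i\|>\gamma_i$. The only point to tidy in a full write-up is the bookkeeping of the offset $\cM y_2$ inside the damping term against $q\eta_1$ (the paper bounds $\|D(\cM y_2+V\eta_2)\|\le d(\|\cM\|r_2+\|\eta_2\|)$, which is where the summand $q\tau d\|\cM\|r_2$ in $E_1$ comes from), so that your constants coincide exactly with those of~\eqref{def:constants} defining $\tilde Z_1$.
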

The proof is relegated to~\ref{Proof:Thm-V-is-a-Lyapunov-function}.
\begin{Rem}
The sets~$Z_i$ in Theorem~\ref{Thm:V-is-a-Lyapunov-function} are determined by the system parameters only and hence conditions 
\eqref{eq:K-cons}--\eqref{eq:C-Lipschitz} can be verified without decoupling the internal dynamics.
\end{Rem}
Finally, we combine Theorem~\ref{Thm:Boundedness-ID} and Theorem~\ref{Thm:V-is-a-Lyapunov-function} to obtain a stability result for the internal dynamics~\eqref{eq:ID_constant_matrices} of system~\eqref{eq:system-ODE-with-f-structure}.
\begin{Thm} \label{Thm:1+2}
Consider system~\eqref{eq:system-ODE-with-f-structure} and use the assumptions from Theorem~\ref{Thm:V-is-a-Lyapunov-function}. Then the internal dynamics~\eqref{eq:ID_constant_matrices} of system~\eqref{eq:system-ODE-with-f-structure} are bounded-input, bounded-output stable.
\end{Thm}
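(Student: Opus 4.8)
The plan is to read Theorem~\ref{Thm:1+2} through the lens of the remark following Definition~\ref{Def:BIBO}: the ``input'' of the internal dynamics~\eqref{eq:ID_constant_matrices} is the pair $(y,\dot y)$ and its ``output'' is the state $\eta=(\eta_1^\top,\eta_2^\top)^\top$, so bounded-input, bounded-output stability here means that bounded $y,\dot y$ force a bounded $\eta$. Accordingly I would apply the abstract boundedness Theorem~\ref{Thm:Boundedness-ID} to~\eqref{eq:ID_constant_matrices}, identifying $\zeta=\eta\in\R^{2(n-m)}$, $\Psi=\cF$ from~\eqref{def:cF}, $y_1=y$, $y_2=\dot y$, and taking as candidate the explicit Lyapunov function $\cV$ from~\eqref{fcn:V} supplied by Theorem~\ref{Thm:V-is-a-Lyapunov-function}. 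If the hypotheses of Theorem~\ref{Thm:Boundedness-ID} can be matched, it returns, for every $y,\dot y\in\cL^\infty(\rp\to\R^m)$ with $\|y\|_\infty\le r_1$ and $\|\dot y\|_\infty\le r_2$, the a priori bound $\|\eta\|_\infty\le\max\{\|\eta^0\|,r_3\}+\varepsilon=:C_2$; setting $C_1:=\min\{r_1,r_2\}$ then yields exactly the implication required by Definition~\ref{Def:BIBO}.

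Two hypotheses of Theorem~\ref{Thm:Boundedness-ID} must be checked. First, $\cV$ has to be radially unbounded on $\R^{2(n-m)}$. Since $V_K$ is radially unbounded and, by integrating the lower bound~\eqref{eq:K-lower} along rays, grows at least like $\tfrac{\kappa}{2}\|\cdot\|^2$ far from the origin, the candidate obeys $\cV(\eta_1,\eta_2)\gtrsim\tfrac12\|\eta_2\|^2-q\|\eta_1\|\,\|\eta_2\|+\tfrac{\kappa}{2\lambda^2}\|\eta_1\|^2$ for large arguments. I would verify that this leading quadratic form in $(\|\eta_1\|,\|\eta_2\|)$ is positive definite, so that $\cV\to\infty$ in every direction; this amounts to dominating the indefinite cross term $q\,\eta_1^\top\eta_2$, which I expect to follow from the admissible ranges of $\lambda$ and $q$ fixed in Theorem~\ref{Thm:V-is-a-Lyapunov-function} (and, if it does not, is the mild extra requirement $q<\sqrt{\kappa}/\lambda$). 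Second, I would keep $r_1,r_2$ as fixed there and choose the threshold $r_3$ from the radii $\max\{z_i^+,E_i/\varepsilon_i,\gamma_i\}$ defining $Z_i\cap\tilde Z_i$, so that the decrease estimate~\eqref{eq:Lyapunov_leq_0} is available.

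The hard part is reconciling the two regions. Theorem~\ref{Thm:V-is-a-Lyapunov-function} delivers $\cV'(\eta_1,\eta_2)\cdot\cF(\eta_1,\eta_2,y_1,y_2)\le0$ only on the ``both components large'' set $(Z_1\cap\tilde Z_1)\times(Z_2\cap\tilde Z_2)$, whereas the first-exit argument in the proof of Theorem~\ref{Thm:Boundedness-ID} invokes the sign condition on the whole exterior $\{\|\eta\|>r_3\}$, and a large Euclidean norm forces only one of $\|\eta_1\|,\|\eta_2\|$ to be large, not both. To bridge this I would exploit the coupling $\dot\eta_1=\lambda\eta_2$ from~\eqref{def:cF}: in the part of the exterior where $\eta_2$ is large but $\eta_1$ is not, this relation makes $\eta_1$ drift quickly, so a trajectory cannot linger in that slab and is driven into the both-large set in finite time, while in the complementary part, where $\eta_1$ is large and $\eta_2$ is small, $\eta_1$ moves slowly and one estimates $\cV$ along the flow directly. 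Making the control of $\cV$ in this ``mixed'' region rigorous, and thereby passing from the both-large estimate of Theorem~\ref{Thm:V-is-a-Lyapunov-function} to an estimate usable on the entire exterior demanded by Theorem~\ref{Thm:Boundedness-ID}, is the crux of the argument. Once this is settled, continuity of $\cF$ together with the resulting a priori bound rules out finite escape time, so maximal solutions are global and the invocation of Theorem~\ref{Thm:Boundedness-ID} is legitimate, completing the proof.
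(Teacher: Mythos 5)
Your route is exactly the paper's: the paper's entire proof of Theorem~\ref{Thm:1+2} is the word ``Clear,'' i.e.\ apply Theorem~\ref{Thm:Boundedness-ID} with $\zeta=\eta$, $\Psi=\cF$, $y_1=y$, $y_2=\dot y$ and the Lyapunov function $\cV$ from~\eqref{fcn:V} supplied by Theorem~\ref{Thm:V-is-a-Lyapunov-function}. What you add is an audit of whether the hypotheses of Theorem~\ref{Thm:Boundedness-ID} are actually met, and both obstructions you flag are genuine. First, radial unboundedness of $\cV$ on $\R^{2(n-m)}$ is not guaranteed by the stated ranges $0<\lambda<2\kappa/\tau^2$, $0<q<2\delta/(d^2+2\lambda)$: the indefinite cross term $q\,\eta_1^\top\eta_2$ must be dominated by $\tfrac12\|\eta_2\|^2$ and the quadratic growth of $V_K(\lambda^{-1}\cdot)$ extracted from~\eqref{eq:K-lower}, which amounts to an extra smallness condition on $q\lambda$ relative to $\sqrt{\kappa}$ that the paper never imposes (note also that $V_K$, hence $\cV$, is only defined where $\lambda^{-1}\eta_1\in Z_1$, so even the domain of $\cV$ requires care). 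Second, and more seriously, Theorem~\ref{Thm:V-is-a-Lyapunov-function} yields~\eqref{eq:Lyapunov_leq_0} only on the product set where \emph{both} $\|\eta_1\|$ and $\|\eta_2\|$ exceed their thresholds, whereas Theorem~\ref{Thm:Boundedness-ID} demands the sign condition on the full exterior $\{\|\zeta\|>r_3\}$, which contains points where one component is small. The paper's ``Clear'' silently passes over this mismatch; your identification of it as the crux is correct.

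That said, your own proposal does not close the gap either. The bridging argument you sketch --- using $\dot\eta_1=\lambda\eta_2$ to claim trajectories cannot linger in the mixed slabs --- is precisely the estimate that would have to be carried out: while a trajectory transits a region where the sign of $\cV'\cdot\cF$ is unknown, $\cV$ can grow, and without a quantitative bound on that growth the first-exit argument in the proof of Theorem~\ref{Thm:Boundedness-ID} breaks down (its contradiction step needs monotonicity of $\cV$ along the entire excursion beyond the threshold radius). So the honest conclusion is: same approach as the paper, your diagnosis of the two hypotheses that fail to match is a real contribution, but as written both your argument and the paper's terminate at the same unfilled step --- either the decrease estimate of Theorem~\ref{Thm:V-is-a-Lyapunov-function} must be extended to the whole exterior region (e.g.\ by re-estimating~\eqref{eq:Lyapunov} in the mixed regimes, where the dominant negative terms $-q\kappa\lambda^{-1}\|\eta_1\|^2$ resp.\ $-\delta\|\eta_2\|^2$ plausibly still win), or Theorem~\ref{Thm:Boundedness-ID} must be reformulated for product-type regions; neither is done.
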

\begin{proof}
Clear.
\end{proof}
\begin{Rem}
Note, that since $\eta_i \notin Z_i \cap \tilde Z_i$ means $\| \eta_i \| \le \max\{z_i^+,  E_i/\ve_i, \gamma_i^+ \} $, it is clear that the choice of~$\lambda$ and~$q$ in Theorem~\ref{Thm:V-is-a-Lyapunov-function} does not effect the stability statement in Theorem~\ref{Thm:1+2} but only determines 
%the sets~$\tilde Z_i$ and thus $Z_i \cap \tilde Z_i$, i.e., 
the region where~\eqref{eq:Lyapunov_leq_0} is true.
\end{Rem}
\begin{Rem}
Since we require quite restrictive conditions for systems of the form~\eqref{eq:system-ODE-with-f-structure} to provide sufficient condition on the system parameters to ensure bounded-input, bounded-output stability, we stress that a stability analysis of a given system can be performed analysing equations~\eqref{eq:eta-dynamics}. In Example~\ref{Ex:Roboterarm} we will decouple the internal dynamics of a nonlinear system, which does not belong to the class of systems~\eqref{eq:system-ODE-with-f-structure}. The resulting equations~\eqref{eq:Roboter-ID} are open to stability analysis, cf.~\cite[Sec. 4.3]{BergLanz20}.
\end{Rem}
\section{Examples} \label{Sec:Example}
In this section we present two examples to illustrate the results of the present article. First, we consider a robotic manipulator arm with non-constant mass matrix and nonlinear equations of motion, in order to demonstrate the decoupling of the internal dynamics.
Second, we consider an extension of the standard mass on a car system to illustrate how to check bounded-input, bounded-output stability of the internal dynamics without the need to decouple the internal dynamics explicitly.
\subsection{Example: Robotic manipulator} \label{Ex:Roboterarm}
In order to demonstrate the decoupling of the internal dynamics using the results from Secion~\ref{Sec:Representation-ID}, we consider a robotic manipulator arm as in~\cite[Sec. 4.2]{SeifBlaj13}. The rotational manipulator arm consists of two links with homogeneous mass distribution and mass~$m$ with length~$l$. A passive joint consisting of a linear spring-damper combination couples the two links to each other. Passive in this context means, that there is no input force at this point. We stress, that the linearity of the passive joint does not result in linear equations of motion, 
see~\eqref{eq:Roboterarm}.
As an output we measure the position of point~$S$ on the second link. Using a body fixed coordinate system the point~$S$ on the passive link is described by ${0 \leq s \leq l}$. 
The situation is depicted in Figure~\ref{Fig:Roboterarm}.
\begin{figure}[h!] 
\begin{center}
\includegraphics[width=0.75\linewidth,angle=0]{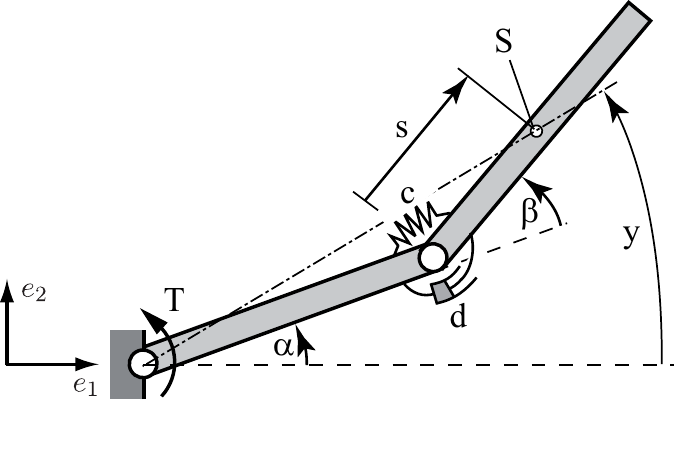}
\caption{Rotational mapilator arm consisting of two links and a passive joint. The figure is taken from from~\cite{SeifBlaj13}.}
\label{Fig:Roboterarm}
\end{center}
\end{figure}
\ \\
We present the manipulator's equations of motion.
As we will see in a second, it is reasonable to consider the dynamics of the manipulator 
for $\beta \in \cB :=  \left\{ \beta \in \R \ \vline \ \cos(\beta) \neq \tfrac{2 l}{3 s}  \right\}$.
Henceforth, we assume~$\beta \in \cB$ and perform the computations. 
We define $U_\beta := \R \times \cB \times \R^2$, and set
\begin{align*}
 M: \cB & \to \R^{2 \times 2} ,\\
x_2 &\mapsto 
l^2 m \begin{bmatrix} 
\tfrac{5}{3} + \cos(x_2) & \tfrac{1}{3} + \tfrac{1}{2}\cos(x_2) \\ \tfrac{1}{3} + \tfrac{1}{2}\cos(x_2) & \frac{1}{3}
\end{bmatrix}, \\
 f_1 : U_\beta &\to \R \\
(x_1 , x_2 , x_3 , x_4 )^\top &\mapsto \tfrac{1}{2} l^2 m x_4 (2 x_3 + x_4) \sin(x_2), \\
f_2 : U_\beta &\to \R \\
(x_1 , x_2 , x_3 , x_4 )^\top &\mapsto -c x_2 - d x_4 - \frac{1}{2} l^2 m x_3^2 \sin(x_2) ,
\end{align*}
and obtain the equations of motion
\begin{align}
\label{eq:Roboterarm} 
\begin{pmatrix}
\ddot{\alpha}(t) \\ \ddot{\beta}(t)
\end{pmatrix} &= M(\beta(t))^{-1} \begin{pmatrix} f_1(\alpha(t),\beta(t),\dot{\alpha}(t), \dot{\beta}(t)) \\ f_2(\alpha(t),\beta(t),\dot{\alpha}(t), \dot{\beta}(t)) \end{pmatrix} \nonumber \\
 &+ M(\beta(t))^{-1} \begin{bmatrix} 1 \\ 0 \end{bmatrix} u(t).
\end{align}
For later use, we compute the inverse of the mass matrix
\begin{small}
\begin{align*}
M(\beta)^{-1} = \frac{36 (l^2m)^{-2}}{16-9\cos(\beta)^2} 
\begin{bmatrix} 
 \tfrac{1}{3} & -\tfrac{1}{3} - \tfrac{1}{2}\cos(\beta) \\ -\tfrac{1}{3} - \tfrac{1}{2}\cos(\beta) & \tfrac{5}{3} + \cos(\beta)
\end{bmatrix}.
\end{align*}
\end{small}
Following the considerations in~\cite{SeifBlaj13}, we take the auxiliary angle 
\begin{equation*}
%\label{eq:output-manipulator}
y(t) = h(\alpha(t),\beta(t)) = \alpha(t) + \frac{s}{s+l} \beta(t)
\end{equation*} 
as output, which approximates the position~$S$ on the passive link for small angles~$\alpha$ and~$\beta$.
In order to represent system~\eqref{eq:Roboterarm} in the form of~\eqref{eq:ODE} we set
\begin{equation*}
\begin{aligned}
F : U_\beta  &\to \R^4 \\
x=(x_1,\ldots,x_4)^\top & \mapsto \diag(I_2, M(x_2)^{-1}) \begin{pmatrix} x_3 \\ x_4 \\ f_1(x) \\ f_2(x) \end{pmatrix}, \\
G : U_\beta &\to \R^{4 \times 1} \\
x=(x_1,\ldots,x_4)^\top & \mapsto \diag(I_2, M(x_2)^{-1}) \begin{pmatrix} 0 \\ 0 \\ 1 \\ 0 \end{pmatrix}, \\
\tilde h : U_\beta &\to \R \\
x=(x_1,\ldots,x_4)^\top & \mapsto \begin{bmatrix} 1 & \frac{s}{s+l} &0&0 \end{bmatrix} x
\end{aligned}
\end{equation*}
and obtain
\begin{equation*}
\begin{aligned}
\dot x(t) &= F(x(t)) + G(x(t)) u(t), \\
y(t) &= \tilde h(x(t)).
\end{aligned}
\end{equation*}
Now, a short calculation gives the high gain matrix
\begin{equation*}
\begin{aligned}
\Gamma: U_\beta &\to \R \\
x & \mapsto h'(x_1,x_2) M(x_2)^{-1} B =  \begin{bmatrix} 1 & \frac{s}{s+l} \end{bmatrix} M(x_2)^{-1} \begin{bmatrix} 1 \\ 0 \end{bmatrix} \\
& = \frac{36 (l^2m)^{-2}}{16-9\cos(x_2)^2} \left( \frac{1}{3} - \frac{s}{s+l}\left(\frac{1}{3} + \frac{1}{2}\cos(x_2) \right) \right) ,
\end{aligned}
\end{equation*}
which is invertible on~$U_\beta$. Since $\Gamma \neq 0$ for~$x \in U_\beta$, Assumption~\ref{ass:hMB-regular} is satisfied and hence Lemma~\ref{Lem:r-2} yields that system~\eqref{eq:Roboterarm} has relative degree~$r = 2$ on~$U_\beta$.
Note, that for $\beta \in \left\{ \beta \in \R \ \vline \ \cos(\beta) = \tfrac{2 l}{3 s}  \right\}$ 
we have $\Gamma = 0$, from which it is clear why to consider the dynamics for $\beta \in \cB$ only.
Now, we decouple the internal dynamics of system~\eqref{eq:Roboterarm}. We start with the computation of~$\phi_1,\phi_2$ satisfying~\eqref{eq:Phi-Jacobian},\eqref{eq:phi2-G=0}.
First, we calculate~$V: \R \times \cB \to \R^2$ such that $\im V(x_1,x_2) = \ker h'(x_1,x_2)$ 
and obtain $V = \begin{smallbmatrix} - \tfrac{s}{s+l} & 1 \end{smallbmatrix}^\top$.
According to Lemma~\ref{Lem:existence-phi1/2} we may choose for~$\phi_1$ in~\eqref{eq:eta-structure}
$
\phi_1(x_1,x_2) = E \begin{smallpmatrix} x_1 \\ x_2 \end{smallpmatrix} =: [0,1] \begin{smallpmatrix} x_1 \\ x_2 \end{smallpmatrix} ,
$
and using Corollary~\ref{Cor:phi1} we obtain the expression as in~\eqref{eq:x1}
\begin{equation} \label{eq:x_1-example}
\begin{pmatrix} x_1 \\ x_2 \end{pmatrix} = \varphi^{-1}\left(\begin{pmatrix} \xi_1 \\ \eta_1 \end{pmatrix} \right) = \begin{bmatrix} 1 & -\frac{s}{s+l} \\ 0 & 1 \end{bmatrix}\begin{pmatrix} \xi_1 \\ \eta_1 \end{pmatrix} = \begin{pmatrix} \xi_1 - \frac{s}{s+l} \eta_1 \\ \eta_1 \end{pmatrix}.
\end{equation}
Next, we algorithmically compute~$\phi_2$ in~\eqref{eq:eta-structure} according to Lemma~\ref{Lem:phi2}, namely
\begin{equation*}
\begin{aligned}
 \phi_2(x_1,x_2) &= \tilde V(x_2)^\dagger \left(I_2 - M(x_2)^{-1}B \Gamma(x)^{-1} h'(x_1,x_2) \right) \\
 &= \tilde V(x_2)^\dagger \left( I_2 - M(x_2)^{-1} \begin{bmatrix} 1 \\ 0 \end{bmatrix}  \Gamma(x)^{-1} \begin{bmatrix} 1 & \frac{s}{s+l} \end{bmatrix}  \right) \\
 &= \begin{bmatrix} \frac{1}{3} +\frac{1}{2}\cos(x_2) & \frac{1}{3} \end{bmatrix}  ,
\end{aligned}
 \end{equation*} 
where $\tilde V^\dagger$ denotes the pseudoinverse of~$\tilde V$, and we chose $\tilde V(x_2) = V R(x_2)$ 
with $R(x_2) = -\tfrac{3 s \cos(x_2) - 2l}{6(s+l)}$ as a left transformation. We stress, that~$R: \cB \to \R$ is invertible on~$ \cB$ and we use it only for the sake of better legibility. 
Thence, we obtain an expression as in~\eqref{eq:x2}
\begin{equation} \label{eq:x_2-example}
\begin{aligned}
\begin{pmatrix} x_3 \\ x_4 \end{pmatrix} &= M(x_2)^{-1} B \Gamma(x)^{-1} \xi_2 + V R(x_2)^{-1} \eta_2.
\end{aligned}
\end{equation}
Now, we substitute the expressions from~\eqref{eq:x_1-example},\eqref{eq:x_2-example} into the respective functions and obtain via~\eqref{eq:eta-dynamics} the internal dynamics of system~\eqref{eq:Roboterarm}. The internal dynamics of~\eqref{eq:Roboterarm} are given in~\eqref{eq:Roboter-ID}.
\begin{figure*}
\begin{equation} \label{eq:Roboter-ID}
\begin{aligned}
\dot \eta_1(t) = & \, -\frac{(s+l)\left( (3\cos(\eta_1(t)) + 2) \dot y(t) - 6 \eta_2(t) \right)}{2l - 3s\cos(\eta_1(t))}  \\
\ \\
\dot \eta_2(t) = &\, \frac{\sin\left(\eta_{1}(t)\right)}{2} 
\frac{(3\cos(\eta_1(t)) + 2) \dot y(t) - 6 \eta_2(t)}{2l-3s \cos(\eta_{1}(t))}
\cdot \frac{ 2 (l+s)  \left( (l + s)\dot y(t) -3 s \eta_{2}(t)\right) }{2l-3s \cos(\eta_{1}(t)} \\
&-\frac{ 2 \sin(\eta_1(t))  \left( (l + s)\dot y(t) -3 s \eta_{2}(t)\right)^2 }{ (2l-3s \cos(\eta_{1}(t))^2 } 
- \frac{c}{l^2 m} \eta_1(t) 
- \frac{d}{l^2 m} \frac{(s+l)\left( (3\cos(\eta_1(t)) + 2) \dot y(t) - 6 \eta_2(t) \right)}{2l - 3s\cos(\eta_1(t))}
%\ \\
%& -\frac{2\,\sin\left(e_{1}\right)\,{\left(L\,\mathrm{dy}+\mathrm{dy}\,s-3\,e_{2}\,s\right)}^2}{{\left(2\,L-3\,s\,\cos\left(e_{1}\right)\right)}%^2}-\frac{c\,e_{1}-\frac{d\,\left(L+s\right)\,\left(2\,\mathrm{dy}-6\,e_{2}+3\,\mathrm{dy}\,\cos\left(e_{1}\right)\right)}{2\,L-3\,s\,\cos%\left(e_{1}\right)}}{L^2\,m}
\end{aligned}
\end{equation}
\end{figure*}
\begin{Rem}
We highlight, that the computation of the internal dynamics is completely determined by system parameters 
and the application of Lemma~\ref{Lem:existence-phi1/2}, 
Lemma~\ref{Lem:phi2} and Corollary~\ref{Cor:phi1}, and equations~\eqref{fcn:phi2-def},\eqref{eq:x2} and~\eqref{eq:x1}.
Once chosen~$\phi_1$, e.g. $\phi_1(x_1) = E x_1$ as in the proof of Lemma~\ref{Lem:existence-phi1/2}, the decoupling of the internal dynamics can be performed computationally by an algorithm.
\end{Rem}

\subsection{Example: Mass on car}
We illustrate the stability result presented in Section~\ref{Sec:Stability-of-ID}. 
To this end, we consider the mass on a car system presented in~\cite[Sec. 4.2]{OttoSeif18b}; this is an extension of the classical mass on a car system under consideration in~\cite[Sec. 4.1]{SeifBlaj13}.
The system consists of two cars with mass~$m_1$ and~$m_2$, resp. The two cars are coupled via a spring-damper combination with characteristics~$K_2$ and~$D_2$, resp. On the second car a ramp with constant angle~$ 0<\alpha<\pi/2$ is mounted, on which a third mass~$m_3$ is lying and is coupled to the second car via a spring with characteristic $K_3$, and a damping with characteristic $D_3$. The first car is driven by a force $u_1$, and the second car individually is driven by a force~$u_2$. 
We measure the horizontal position of the first and the second car.
The situation is depicted in Figure~\ref{Fig:Wagen}.
\begin{figure}[H]
\begin{center}
\includegraphics[scale=0.30]{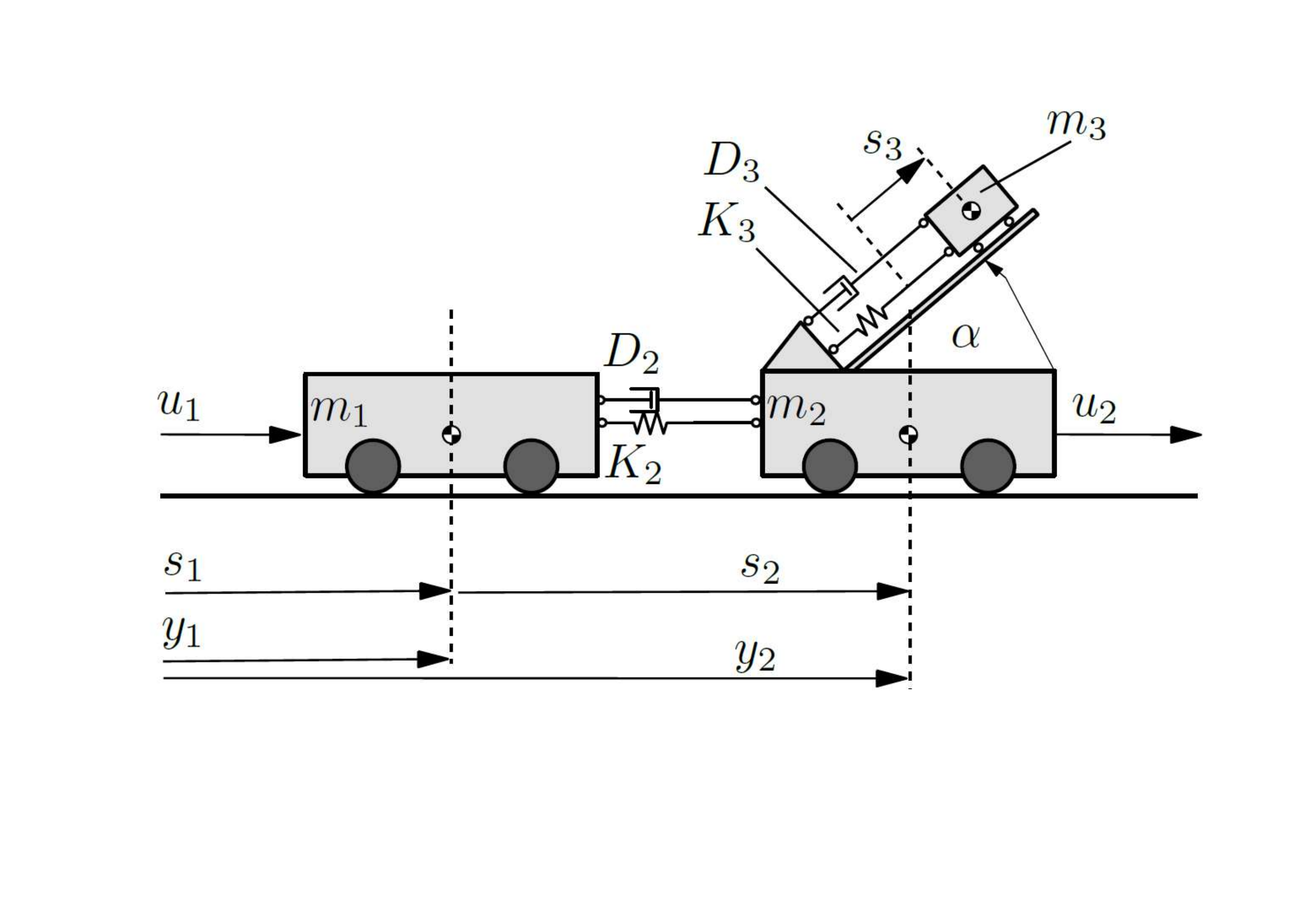}
\caption{Extended mass on a car system. The original figure is taken from~\cite{OttoSeif18b} and edited for the purpose of the present article.}
\label{Fig:Wagen}
\end{center}
\end{figure}
For convenience we assume the constant force on $m_3$ due to gravity, namely $m_3 g \sin(\alpha)$, where~$g$ is the gravitational constant, to be compensated via a linear coordinate transformation, such that $K_3(0) = 0$.
Then, according to~\cite[Sec. 4.2]{OttoSeif18b} with $s:=(s_1,s_2,s_3)^\top \in \R^3$ the equations of motion for that system are given by
\begin{equation} \label{Ex:Wagen}
\begin{aligned}
&\underbrace{
\begin{bmatrix} m_1 + m_2 + m_3 & m_2 + m_3 &  m_3 \cos(\alpha) \\ m_2 +m_3 & m_2 + m_3 & m_3 \cos(\alpha) \\ m_3 \cos(\alpha) & m_3 \cos(\alpha) & m_3 \end{bmatrix}}_{=:  M}
 \begin{pmatrix} \ddot s_1(t) \\ \ddot s_2(t) \\ \ddot s_3(t) \end{pmatrix} \\
  &  = \begin{pmatrix} 0 \\ -K_2(s(t))  - D_2(\dot{s}(t)) \\ -K_3(s(t))  - D_3(\dot{s}(t)) \end{pmatrix} 
+ \underbrace{\begin{bmatrix} 1 & 0\\ 0 & 1 \\ 0& 0 \end{bmatrix}}_{=: B} \begin{pmatrix} u_1(t) \\ u_2(t) \end{pmatrix} , \\
 y(t) & = \begin{pmatrix} y_1(t) \\ y_2(t) \end{pmatrix} = \begin{pmatrix} s_1(t) \\ s_1(t) + s_2(t) \end{pmatrix} = \underbrace{\begin{bmatrix} 1& 0 & 0 \\ 1 & 1 & 0 \end{bmatrix}}_{=: H} \begin{pmatrix} s_1(t) \\ s_2(t) \\ s_3(t) \end{pmatrix} .
\end{aligned}
\end{equation}
In this particular example we have $n=3$ and $m=2$, hence we are in the situation of multi-input, multi-output. 
Note, that the input and the output are not colocated, i.e., $H \neq B^\top$. 
According to the equations given in~\cite{OttoSeif18b} we assume $K_2(s) = k s_2$ and $D_2(\dot s_2) = d \dot s_2$, where $k,d > 0$. In order to include nonlinear terms, we assume that $K_3$ and $D_3$ have the following characteristics, where~$\sigma$ denotes the sign function
\begin{align*}
K_3 : \R^3 &\to \R , \ q  \mapsto \begin{cases}
 \sigma (q_3) \sqrt{|q_3|}  & |q_3| \le 1 \\
  2 q_3 - \sigma (q_3)  & |q_3| > 1
\end{cases}
\end{align*}
and
\begin{align*}
D_3 : \R^3 &\to \R , \ v  \mapsto \begin{cases}
 \sigma(v_3) v_3^2  & |v_3| \le 1 \\
 2 v_3 - \sigma(v_3) & |v_3| > 1.
\end{cases}
\end{align*}
Note, that $K_3 \in \cC(\R^3 \to \R)$ and $D_3 \in \cC(\R^3 \to \R)$.
The schematic shapes of $K_3(\cdot)$ and $D_3(\cdot)$ are depicted in Figure~\ref{Fig:Shape-K-D}.
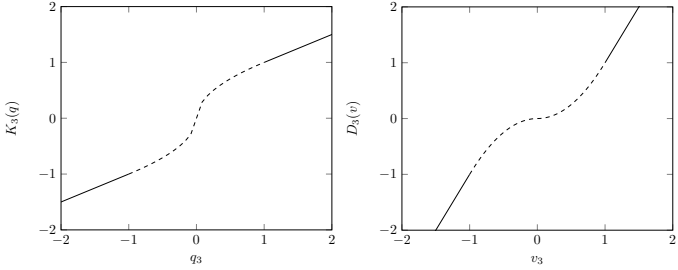
\begin{figure}[h!]
\begin{minipage}{0.48\linewidth}
\begin{tikzpicture}[scale=0.52]
\begin{axis}[%
 xmin=-2,
 xmax=2,
 ymin=-2,
 ymax=2,
ytick={2,1,0,-1,-2},
xtick={-2,-1,0,1,2},
 xlabel={$ q_3$} ,
 ylabel={$ K_3(q)$},
 ]%
\addplot[domain=1:3]{0.5*x + sign(x)*0.5};
\addplot[domain=-3:-1]{0.5*x + sign(x)* 0.5};
\addplot[dashed, domain=-1:1]{sign(x)*sqrt(abs(x))};
\end{axis}
\end{tikzpicture}
\end{minipage}
\
\begin{minipage}{0.48\linewidth}
\begin{tikzpicture}[scale=0.52]
\begin{axis}[%
 xmin=-2,
 xmax=2,
 ymin=-2,
 ymax=2,
ytick={2,1,0,-1,-2},
xtick={-2,-1,0,1,2},
 xlabel={ $ v_3$} ,
 ylabel={ $ D_3(v)$},
 ]%
\addplot[dashed, domain=-1:0]{-x^2};
\addplot[dashed, domain=0:1]{x^2};
\addplot[ domain=1:3]{2*x-1};
\addplot[ domain=-3:-1]{2*x+1};
\end{axis}
\end{tikzpicture}
\end{minipage}
\caption{Schematic shape of $K_3(\cdot)$ and $D_3(\cdot)$, resp. Solid lines on $Z_i$, dashed lines on $\R \setminus Z_i$, $i=1,2$.}
\label{Fig:Shape-K-D}
\end{figure}
\ \\
We set $x_1 :=(s_1,s_2,s_3)^\top$, $x_2 := \dot x_1$, $K(\cdot) := (0,K_2(\cdot),K_3(\cdot))^\top$, $D(\cdot) := (0,D_2(\cdot),D_3(\cdot))^\top$, whereby $K,D \in \cC(\R^3 \to \R^3)$, resp. Further, let $\tilde H := [H,0]$ 
and $\tilde M := \diag(I_3, M)$. Then system~\eqref{Ex:Wagen} reads 
%\begin{equation} \label{eq:Example-ode}
\begin{align} \label{eq:Example-ode}
\dot x(t) &= \tilde M^{-1}  \begin{pmatrix} x_2(t) \\ - K(x_1(t)) - D(x_2(t)) \end{pmatrix}  
%+ \tilde M^{-1} \begin{bmatrix} 0& 0\\0& 0 \\0& 0 \\ 1 & 0 \\ 0 &1 \\ 0& 0\end{bmatrix} \begin{pmatrix} u_1(t) \\ u_2(t) \end{pmatrix}, \nonumber 
+ \tilde M^{-1} \begin{bmatrix} 0 \\ B \end{bmatrix}  \begin{pmatrix} u_1(t) \\ u_2(t) \end{pmatrix}, \nonumber
\\
y(t) &= \tilde H x(t),
%\dot x(t) &= \tilde M^{-1}  \begin{pmatrix} x^1_2(t) \\ x^2_2(t) \\0 \\ - K_2(x^2_1(t)) - D_2(x^2_2(t)) \end{pmatrix}  
%+ \tilde M^{-1} \begin{bmatrix} 0& 0\\0& 0 \\0& 0 \\ 1 & 0 \\ 0 &1 \\ 0& 0\end{bmatrix} \begin{pmatrix} u_1(t) \\ u_2(t) \end{pmatrix} , \nonumber \\
%y(t) &= [1, \cos(\alpha),0,0]x(t),
\end{align}
and~\eqref{eq:Example-ode} is of the form~\eqref{eq:system-ODE-with-f-structure}.
We set $\mu := m_2 + m_3 \sin(\alpha)^2$
and calculate
\begin{align*}
M^{-1} = \begin{bmatrix} \frac{1}{m_1} & - \frac{1}{m_1} & 0\\ -\frac{1}{m_1} & \frac{m_1 + \mu}{m_1 \mu} & -\frac{\cos(\alpha)}{\mu} \\ 
0 & -\frac{\cos(\alpha)}{\mu} & \frac{m_2 + m_3}{\mu m_3} \end{bmatrix},  
%M^{-1}B  =  \begin{bmatrix} \frac{1}{m_1} & -\frac{1}{m_1} \\ -\frac{1}{m_1} & \frac{m_1 + \mu}{m_1 \mu} \\ 0 & -\frac{\cos(\alpha)}{\mu}  \end{bmatrix}
\end{align*}
and 
\begin{equation*}
\begin{aligned}
\Gamma = H M^{-1} B = \begin{bmatrix} \frac{1}{m_1} & -\frac{1}{m_1} \\ 0 & \frac{1}{\mu} \end{bmatrix} \in \Gl_2(\R), \ 
\Gamma^{-1} = \begin{bmatrix} m_1 & \mu \\ 0 & \mu \end{bmatrix}.
\end{aligned}
\end{equation*}
Therefore, assumption~\ref{ass:hMB-regular} is satisfied and thus, using Lemma~\ref{Lem:r-2}, system~\eqref{eq:Example-ode} has relative degree $r=2$ on~$\R^{6}$.
We calculate~$V$ such that $\im V = \ker H$ and $ V^\top V = I_{2-1}$, thus $V = (0,0,1)^\top$.
Then, according to~\eqref{fcn:phi2-def} $\phi_2$ is given via
\begin{equation*}
\begin{aligned}
\phi_2 &= \begin{pmatrix} 0&0&1 \end{pmatrix} (I_3 - M^{-1}B \Gamma^{-1}H) \\
&= \begin{bmatrix} \cos(\alpha) &  \cos(\alpha)  & 1 \end{bmatrix}
\end{aligned}
\end{equation*}
and thus
\begin{equation*}
\begin{aligned}
\Theta &:= \phi_2 M^{-1} =  \begin{bmatrix} 0 & 0 & \frac{1}{m_3} \end{bmatrix} .
% =: \begin{bmatrix} \Theta_1 & \Theta_2 & \Theta_3 \end{bmatrix} .
\end{aligned}
\end{equation*}
Obviously, we have $Z_i = \{ z \in \R \ \vline \ |z|> 1 \}$, $i=1,2$.
Now, we validate conditions~\eqref{eq:K-cons}--\eqref{eq:D2-Lipschitz} step by step.
Consider $V_K : Z_1 \to \R$ defined by
\begin{align*}
V_K: Z_1 &\to \R, \
%z_1 \mapsto \Theta_2 \frac{k z_1^2}{2} + \Theta_3 (z_1^2 -  |z_1|) ,
z_1 \mapsto \frac{1}{m_3} (z_1^2 -  |z_1|) ,
\end{align*}
which is radially unbounded. Note, that $Z_1 = \R \setminus [-1,1]$ and hence $V_K \in \cC^1(Z_1 \to \R)$. Then for $z_1 \in Z_1$ the derivative of~$V_K$ is given by 
\begin{align*}
V_K'(z_1) &= \frac{1}{m_3} ( 2 z_1 -  \sigma(z_1))  \\
&= \begin{pmatrix} 0 & 0 & 2 z_1 -  \sigma(z_1) \end{pmatrix} \begin{bmatrix} 0 \\ 0 \\ \frac{1}{m_3} \end{bmatrix}
& = K(V z_1)^\top \Theta^\top
\end{align*}
 for $z_1 \in Z_1$ and thus~\eqref{eq:K-cons} is satisfied. Furthermore,
\begin{align*}
 &  \| K(Vz_1) - K(Vz_1 + w)\| \\
 & = \left\| \begin{pmatrix} 0 \\ k z_1 \\ 2 z_1 - \sigma(z_1)  \end{pmatrix} - \begin{pmatrix} w_1 \\ k z_1 + w_2 \\ 2 z_1 - \sigma(z_1) + w_3 \end{pmatrix} \right\| = \|w\|,
\end{align*}
which proves~\eqref{eq:K-upper}, and
\begin{align*}
z_1 \Theta K(Vz_1) &= z_1 \frac{1}{m_3} (2 z_1 - \sigma(z_1) )\\
& = \frac{1}{m_3} (2z_1^2 - |z_1|) \ge \frac{1}{m_3} z_1^2
\end{align*}
for $z_1 \in Z_1$ shows~\eqref{eq:K-lower}. Conditions~\eqref{eq:D2-upper}--\eqref{eq:D2-lower} for~$D$ follow analogously for $z_2 \in Z_2 = Z_1$. For~\eqref{eq:D2-Lipschitz} consider
\begin{align*}
\|D(V z_2) \| &= \left\| \begin{pmatrix} 0 \\ d z_2 \\ 2z_2 - \sigma(z_2) \end{pmatrix} \right\| \\
&\le d |z_2| + |2 z_2 - \sigma(z_2) | \\
& \le (d + 2) | z_2| + 1 \le (d + 3) |z_2|, 
\end{align*}
for $z_2 \in Z_2$, 
which shows~\eqref{eq:D2-Lipschitz}. Therefore, via Theorem~\ref{Thm:1+2} we may deduce 
BIBO stability of the internal dynamics of the extended mass on a car system~\eqref{Ex:Wagen}.

\section{Conclusion}
In the present article we elaborated two main results. First, we presented a suitable set of coordinates to represent the internal dynamics of a nonlinear multibody system. 
This representation is completely determined by the system parameters and therefore the internal dynamics can be decoupled algorithmically and are open to e.g. stability analysis. In particular, it is not necessary to compute the Byrnes-Isidori form to obtain an explicit representation of the internal dynamics. 
Second, we gave an abstract stability result for control systems and derived sufficient conditions on the system parameters of a multibody system such that its internal dynamics are bounded-input, bounded-output stable. We highlight that the conditions can be verified in advance and hence it is not necessary to derive the internal dynamics in the first place.
\ \\

Now, further research aims to find similar representations of the internal dynamics of multibody systems with vector relative degree and differential algebraic systems.
\ \\
Moreover, we aim to achieve similar stability results for nonlinear multibody systems with a more complex function~$f$, a state dependent mass matrix, such as e.g. the robotic manipulator arm with a passive joint, and systems with algebraic constraints such as e.g. systems with a kinematic loop.
\paragraph{Acknowledgements} I thank Thomas Berger (University of Paderborn) for many helpful discussions, suggestions and corrections.
Furthermore, I am indebted to Svenja Dr\"ucker (TU Hamburg) for bringing the extended mass on a car example to my attention.
\appendix 
\section{Proof of Theorem~\ref{Thm:V-is-a-Lyapunov-function}} \label{Proof:Thm-V-is-a-Lyapunov-function}
\begin{proof} 
Recall $\Theta = \phi_2 M^{-1} \in \R^{(n-m) \times n}$, and for $\tau := \|\Theta\|$
let $0 < \lambda < 2\kappa/\tau^2 $ and $0 < q < 2 \delta/( d^2 + 2\lambda )$. 
For $i=1,2$ let $\eta_i \in Z_i$ and $y_i \in B_{r_i}(0)$. We calculate the Lie derivative of~$\cV$ from~\eqref{fcn:V} along the vector field~$\cF$ from~\eqref{eq:ID_constant_matrices}
%\tb{{%
\begin{equation}\label{eq:Lyapunov}
\begin{aligned}
%\ddt \cV(\eta_1,\eta_2) 
&\cV'(\eta_1,\eta_2) \cdot \cF(\eta_1,\eta_2,y_1, y_2)\\
&=   K(\lambda^{-1} V \eta_1)^\top \Theta^\top  \eta_2 + q \lambda \eta_2^\top \eta_2\\
&+  \eta_2^\top  \Theta \big(-K(\cM y_1 + \lambda^{-1} V \eta_1)  - D(\cM y_2 + V \eta_2) \big) \\
&- \eta_2^\top \Theta C(\cM y_1 + \lambda^{-1} V \eta_1, \cM y_2 + V \eta_2) (\cM y_2 + V \eta_2) \\
&+ q \eta_1^\top \Theta \big(-K(\cM y_1 + \lambda^{-1} V \eta_1) - D(\cM y_2 + V \eta_2) \big) \\
& - q \eta_1^\top \Theta C(\cM y_1 + \lambda^{-1} V \eta_1, \cM y_2 + V \eta_2) (\cM y_2 + V \eta_2) \\
&= 
\eta_2^\top \Theta \big( K(\lambda^{-1} V \eta_1) -K(\cM y_1 + \lambda^{-1} V \eta_1)  \big) \\
& + q \lambda \| \eta_2 \|^2 
- \eta_2^\top \Theta D(\cM y_2 + V \eta_2) \\
& - \eta_2^\top \Theta C(\cM y_1 + \lambda^{-1} V \eta_1, \cM y_2 + V \eta_2) (\cM y_2 + V \eta_2) \\
&- q \eta_1^\top  \Theta  K(\cM y_1 + \lambda^{-1}V \eta_1)  
- q \eta_1^\top  \Theta D(\cM y_2 + V \eta_2) \\
&- q \eta_1^\top \Theta C(\cM y_1 + \lambda^{-1} V \eta_1, \cM y_2 + V \eta_2) (\cM y_2 + V \eta_2). \\
\end{aligned}
\end{equation}
%}}%
For purpose of better legibility we set $\mu := \|\cM\|$ and estimate the addends in~\eqref{eq:Lyapunov} separately for $\eta_i \in Z_i$, resp. 
Note, that since~$V$ from Lemma~\ref{Lem:phi2} has orthonormal columns we have~$\|V z\| = \|z\|$ for~$z \in \R^{n-m}$. 
\\
\underline{Step i)}
\begin{equation*}
\begin{aligned}
&\eta_2^\top \Theta \big( K(\lambda^{-1} V \eta_1) -K(\cM y_1 + \lambda^{-1} V \eta_1)  \big)  \\
&\le \tau \| \eta_2 \| \|  K(\lambda^{-1} V \eta_1) -K(\cM y_1 + \lambda^{-1} V \eta_1)   \| \\
&\overset{\eqref{eq:K-upper}}{\le}  \tau \| \eta_2 \| g_1(\cM y_1)  \le  \tau \tilde K \| \eta_2 \|
\end{aligned}
\end{equation*}
\underline{Step ii)}
\begin{equation*}
\begin{aligned}
&-\eta_2^\top \Theta D(\cM y_2 + V \eta_2) \\
&= \eta_2^\top \Theta \big( D(V \eta_2) - D(\cM y_2 + V \eta_2) - D(V \eta_2) \big) \\
& \le \tau  \| \eta_2\| \|  D(V \eta_2) - D(\cM y_2 + V \eta_2)\| - \eta_2^\top  \Theta D(V \eta_2) \\
& \overset{\eqref{eq:D2-upper}}{\le} \tau  \| \eta_2\| g_{2}(\cM y_2) - \eta_2^\top  \Theta D(V \eta_2) 
%& \le  \tau \| \eta_2 \| \underbrace{ \max_{\|y_2\|_{\infty} \leq r_2} g_{D}(\cM y_2) }_{=: R_{D}} - \eta_2^\top V^\top \Theta D(V \eta_2) \\
 \overset{\eqref{eq:D2-lower}}{\le} \tau \tilde D  \| \eta_2 \| - \delta \| \eta_2\|^2
\end{aligned}
\end{equation*}
\underline{Step iii)}
\begin{equation*}
\begin{aligned}
&- q \eta_1^\top \Theta K(\cM y_1 + \lambda^{-1}V \eta_1) \\
%&= q \eta_1^\top  \Theta \big( K(\lambda^{-1}V \eta_1) - K(\cM y_1 + \lambda^{-1}V \eta_1) - K(\lambda^{-1}V \eta_1) \big) \\
& \le q \tau  \| \eta_1\| \| K(\lambda^{-1}V \eta_1) - K(\cM y_1 + \lambda^{-1}V \eta_1)\| \\ 
& \quad -  q \eta_1^\top \Theta K(\lambda^{-1}V \eta_1)  \\
&\overset{\eqref{eq:K-upper}}{\le} q \tau  \| \eta_1\| g_1(\cM y_1) -  q \eta_1^\top \Theta K(\lambda^{-1}V \eta_1) \\
&\overset{\eqref{eq:K-lower}}{\le} q \tau \tilde K  \| \eta_1\| - q  \kappa  \lambda^{-1} \|\eta_1\|^2 
%= q R_K \tau \| \eta_1\| - q  \kappa   \lambda^{-1}  \| \eta_1\|^2
\end{aligned}
\end{equation*}
\underline{Step iv)}
\begin{equation*}
\begin{aligned}
& -q \eta_1^\top \Theta D(\cM y_2 + V \eta_2)   \le q \tau \| \eta_1 \| \| D(\cM y_2 + V \eta_2) \|  \\
& \overset{\eqref{eq:D2-Lipschitz}}{\le} q  \|\eta_1\| d \|\cM y_2 + V \eta_2 \|  \le q \tau d \mu r_2 \|\eta_1\| + q \tau d \|\eta_1\| \|\eta_2\| \\
& \le q \tau d \mu r_2 \|\eta_1\| +  q\frac{\tau^2}{2}\|\eta_1\|^2 + q \frac{d^2}{2} \| \eta_2 \|^2,
\end{aligned}
\end{equation*}
where we used $2 a b \le a^2 + b^2$ for all $a,b \in \R$ in the last line. 
%\tb{{%
We continue\\
\underline{Step v)}
\begin{equation*}
\begin{aligned}
& -q \eta_1^\top \Theta C(\cM y_1 + \lambda^{-1} V \eta_1, \cM y_2 + V \eta_2) (\cM y_2 + V \eta_2) \\
 = & \, q \eta_1^\top \Theta \big[ C( \lambda^{-1} V \eta_1, \cM y_2 + V \eta_2) \\ 
& \quad - C(\cM y_1 + \lambda^{-1} V \eta_1, \cM y_2 + V \eta_2) \big] (\cM y_2 + V \eta_2) \\
& - q \eta_1^\top \Theta C( \lambda^{-1} V \eta_1, \cM y_2 + V \eta_2) (\cM y_2 + V \eta_2)\\
\le & \, q \tau \| \eta_1\| \| \big[ C( \lambda^{-1} V \eta_1, \cM y_2 + V \eta_2) \\ 
& \quad - C(\cM y_1 + \lambda^{-1} V \eta_1, \cM y_2 + V \eta_2)  \big] (\cM y_2 + V \eta_2) \| \\ 
& - q \eta_1^\top \Theta C( \lambda^{-1} V \eta_1, \cM y_2 + V \eta_2)(\cM y_2 + V \eta_2) \\
\overset{\eqref{eq:C-upper1}}{\le}  & \, q \tau \|\eta_1\| g_3(\cM y_1) a_3(\cM y_2 + V \eta_2) \\
& - q \eta_1^\top \Theta C( \lambda^{-1} V \eta_1, \cM y_2 + V \eta_2)(\cM y_2 + V \eta_2) \\
\overset{\eqref{eq:C-lower1}}{\le}  & \, q \tau \tilde C_3 \| \eta_1\| a_3( \cM y_2 + V \eta_2 )  
- \frac{q }{\lambda} \| \eta_1 \|^2 \, b_3( \cM y_2 + V \eta_2 )   \\
\le & q \tau \tilde C_3 \| \eta_1\| a_3( \cM y_2 + V \eta_2 )  - \frac{q }{\lambda} \| \eta_1 \|^2 \, a_3( \cM y_2 + V \eta_2 )   \\
= & \, \frac{q }{\lambda} \left( \lambda \tau \tilde C_3 - \|\eta_1\| \right) \|\eta_1 \| a_3( \cM y_2 + V \eta_2 ) 
\end{aligned}
\end{equation*}
%}}%
%
%\tb{{%
\underline{Step vi)}
\begin{equation*}
\begin{aligned}
& - \eta_2^\top \Theta C(\cM y_1 + \lambda^{-1} V \eta_1, \cM y_2 + V \eta_2) (\cM y_2 + V \eta_2) \\
 = & \, \eta_2^\top \Theta \big[ C(\cM y_1 + \lambda^{-1} V \eta_1,  V \eta_2) \\ 
& \quad - C(\cM y_1 + \lambda^{-1} V \eta_1, \cM y_2 + V \eta_2) \big] (\cM y_2 + V \eta_2) \\
& - \eta_2^\top \Theta C(\cM y_1 + \lambda^{-1} V \eta_1,  V \eta_2) (\cM y_2 + V \eta_2) \\
\le & \, \tau \| \eta_2\| \| \big[ C( \cM y_1 + \lambda^{-1} V \eta_1, V \eta_2) \\ 
& \quad - C(\cM y_1 + \lambda^{-1} V \eta_1, \cM y_2 +  V \eta_2)  \big] (\cM y_2 + V \eta_2) \| \\ 
& -  \eta_2^\top \Theta C(\cM y_1 + \lambda^{-1} V \eta_1, V \eta_2)(\cM y_2 + V \eta_2) \\
\overset{\eqref{eq:C-upper2}}{\le}  & \, \tau \|\eta_2\| \, g_4(\cM y_2) \, a_4(\cM y_1 + \lambda^{-1} V \eta_1 ) \, \| \eta_2\| \\
& -  \eta_2^\top \Theta C( \cM y_1 + \lambda^{-1} V \eta_1,  V \eta_2)(\cM y_2 + V \eta_2) \\
\overset{\eqref{eq:C-lower2}}{\le}  & \, \tau \|\eta_2\|^2 \, g_4(\cM y_2) \, a_4(\cM y_1 + \lambda^{-1} V \eta_1 ) \\
& -  b_4(\cM y_1 + \lambda^{-1} V \eta_1 ) \| \eta_2 \| \| V \eta_2\|^2 \\
& -  \eta_2^\top \Theta C( \cM y_1 + \lambda^{-1} V \eta_1,  V \eta_2) \cM y_2  \\
\overset{\eqref{eq:C-Lipschitz}}{\le} & \, \tau \tilde C_4 \,  a_4(\cM y_1 + \lambda^{-1} V \eta_1 ) \|\eta_2\|^2 \\
& -  b_4( \cM y_1 + \lambda^{-1} V \eta_1 ) \, \| \eta_2 \|^3 \\
& + \tau  \mu r_2 \,  a_4( \cM y_1 + \lambda^{-1} V \eta_1 ) \, \|\eta_2 \|^2 \\
\le & \,   \left( \tau (\tilde C_4 +  \mu r_2)  - \|\eta_2\| \right)  \|\eta_2\|^2 \, a_4( \cM y_1 + \lambda^{-1} V \eta_1 ),
\end{aligned}
\end{equation*}
%}}%
%\tb{{%
where we used $\| V z\| = \|z\|$ for all~$z \in \R^{n-m}$ in the second last estimation.
%}}%
We summarize the calculations above to estimate $ \cV'(\eta_1,\eta_2) \cdot \cF(\eta_1, \eta_2, y_1, y_2)$ for $\eta_i \in Z_i$ and $y_i \in B_{r_i}(0)$, $i=1,2$
%\tb{{%
\begin{equation*}
\begin{aligned}
& \cV'(\eta_1,\eta_2) \cdot \cF(\eta_1, \eta_2, y_1, y_2)  \\
& \le
q \lambda \| \eta_2 \|^2 
+  \tau \tilde K  \| \eta_2 \| 
+ \tau \tilde D  \| \eta_2 \| - \delta \| \eta_2\|^2 +
 q \tau \tilde K  \| \eta_1\| \\
 & - q  \frac{\kappa}{  \lambda } \| \eta_1\|^2 
 + q \tau d \mu r_2 \|\eta_1\| +  q \frac{\tau^2}{2}\|\eta_1\|^2 
 + q\frac{d^2}{2} \| \eta_2 \|^2 \\
& + \frac{q }{\lambda} \left( \lambda \tau \tilde C_3 - \|\eta_1\| \right) \|\eta_1 \| a_3( \cM y_2 + V \eta_2 ) \\
& +  \left( \tau (\tilde C_4 +  \mu r_2)  - \|\eta_2\| \right)  \|\eta_2\|^2 \, a_4( \cM y_1 + \lambda^{-1} V \eta_1 ) .
\end{aligned}
\end{equation*}
%}}%
Sorting these expressions and inserting the constants from~\eqref{def:constants} yields 
%\tb{{%
\begin{equation} \label{eq:Lyapunov-short}
\begin{aligned}
 \cV' & (\eta_1, \eta_2) \cdot \cF(\eta_1, \eta_2, y_1, y_2) \\ 
 \le & - \varepsilon_1 \| \eta_1\|^2 +  E_1 \| \eta_1 \| - \varepsilon_2 \|\eta_2\|^2 + E_2 \| \eta_2\| \\
& -  \frac{q }{\lambda} ( \|\eta_1\| - \gamma_1 )  \|\eta_1 \| a_3( \cM y_2 + V \eta_2 ) \\
& -    \left(  \|\eta_2\| - \gamma_2 \right)  \|\eta_2\|^2 \, a_4( \cM y_1 + \lambda^{-1} V \eta_1 ),
\end{aligned}
\end{equation}
%}}%
where~$\varepsilon_1, \varepsilon_2 > 0$ and $ E_1, E_2 \ge 0$ via the choice of~$q$ and~$\lambda$, 
%\tb{{
and $\gamma_1, \gamma_2 \ge 0 $. 
%}}
Now, we consider the function
\begin{align} \label{eq:compare-function}
 W: \R^{2(n-m)} & \to \R \\
( w_1 , w_2 ) &\mapsto -\ve_1 \|w_1\|^2 + E_1 \|w_1\| -\ve_2 \|w_2\|^2 + E_2 \|w_2\| \nonumber, 
\end{align}
with $\ve_i >0$, $E_i \ge 0$ for~$i=1,2$ as above.
A short calculation yields that for~$w_i \in \tilde Z_i$ we have
$
 W(w_1,w_2) \le 0,
$ 
%\tb{{%
and $-(\|w_1\| - \gamma_1) < 0$ and $-(\|w_2\| - \gamma_2) < 0$.
%}}
Comparing~\eqref{eq:Lyapunov-short} and~$\eqref{eq:compare-function}$ yields assertion~\eqref{eq:Lyapunov_leq_0}
\begin{equation*}
\cV'(\eta_1,\eta_2) \cdot \cF(\eta_1, \eta_2, y_1, y_2) \le W(\eta_1, \eta_2) \leq 0,
\end{equation*}
for all $\eta_i \in Z_i \cap \tilde Z_i$, $i=1,2$, and $y_1 \in B_{r_1}(0),\, y_2 \in B_{r_2}(0)$.
\end{proof}

\end{document}